\theoremstyle{plain} 
\newtheorem{thm}{Theorem}[section] 
\newtheorem{prop}[thm]{Proposition} 
\newtheorem{lem}[thm]{Lemma} 
\theoremstyle{definition} 
\newtheorem{rem}[thm]{Remark} 
\newtheorem{fact}[thm]{Fact} 
\newtheorem*{acknow}{Acknowledgements} 
\theoremstyle{remark} 
\newtheorem*{notation}{Notation}
\def\bfchi{\boldsymbol{\chi}}
\title[A semi-ordinary $p$-stabilization of Siegel Eisenstein series]
{A semi-ordinary $p$-stabilization of 
Siegel Eisenstein series for symplectic groups and its $p$-adic interpolation
}
\author{Hisa-aki KAWAMURA}
\address{
Department of Mathematics, Hiroshima University, 
1-7-1, Kagamiyama, Higashi-Hiroshima, 
JAPAN}
\email{hisa@hiroshima-u.ac.jp}
\date{August 20th, 2018}
\dedicatory{
}
\subjclass[2010]{11F46 (Primary); 11F30, 11F33 (Secondary)}
\keywords{Siegel modular forms, Siegel Eisenstein series, $\Lambda$-adic forms, $p$-adic analytic families}
\begin{document}
\maketitle

\begin{abstract}
Given a prime number $p$, we introduce a certain $p$-stabilization of holomorphic Siegel Eisenstein series 
for the symplectic group ${\rm Sp}(2n)_{/\mathbb{Q}}$ such that the resulting forms 
satisfy the semi-ordinary condition at $p$, 
that is, the associated eigenvalue of a generalized Atkin $U_p$-operator is a $p$-adic unit. 
In addition, we derive an explicit formula for all Fourier coefficients of such $p$-stabilized Siegel Eisenstein series, and 
conclude their $p$-adic interpolation problems. 
This states the existence of a quite natural generalization of the ordinary $\Lambda$-adic Eisenstein series 
which have been constructed by Hida and Wiles for ${\rm GL(2)}_{/\mathbb{Q}}$. 
\end{abstract}


\section{Introduction}

\vspace*{2mm}
Given a positive integer $M$, a Dirichlet character $\chi$ modulo $M$ and an integer $\kappa \ge 2$ with $\chi(-1)=(-1)^{\kappa}$, 
the classical Eisenstein series is defined as follows: 
For each $z \in \mathfrak{H}_1=\{z \in \mathbb{C}\mid{\rm Im}\, z >0\}$, put
\[
E_{\kappa,\,\chi}(z)\,(=E_{\kappa,\,\chi}^{(1)}(z)):= 
\left\{{2\,G(\chi^{-1})(-2\pi\sqrt{-1})^\kappa \over M^\kappa (\kappa-1)!}\right\}^{-1}
\sum_{(c,d) \in \mathbb{Z}^2 \smallsetminus \{(0,0)\}} \chi^{-1}(d) \,(c\,Mz+d)^{-\kappa},
\] 
where $G(\chi^{-1})=
\sum_{n=1}^{M} \chi^{-1}(n) \exp(2\pi\sqrt{-1}\,n/M)$. 
As is well-known, $E_{\kappa,\,\chi}$ gives rise to a holomorphic modular form of weight $\kappa$ and 
nebentypus character $\chi$ for the congruence subgroup $\Gamma_0(M)$ of ${\rm SL}(2, \mathbb{Z})$ unless $\kappa=2$ and $\chi$ is trivial (or identical). 
If $\chi$ is trivial (i.e., $M=1$) or primitive (i.e., the conductor of $\chi$ equals $M>1$), then $E_{\kappa,\,\chi}$ possesses the Fourier expansion 
\[
E_{\kappa,\,\chi}(z)={L(1-\kappa,\chi) \over 2}+ \sum_{m =1}^{\infty} \sigma_{\kappa-1,\, \chi}(m)\, q^m, 
\]
where 
$L(s, \chi)=
\prod_{\scriptstyle l \,{\rm : \,prime}} (1-\chi(l)\,l^{-s})^{-1}$, $\sigma_{\kappa-1,\,\chi}(m)=
\sum_{\scriptstyle 0<d \,\mid\, m} \chi(d)\,d^{\kappa-1}$ and $q=\exp(2\pi \sqrt{-1}z)$. 
This implies that 
$E_{\kappa,\,\chi}$ is a non-cuspidal Hecke eigenform such that the associated $L$-function $L(s,E_{\kappa,\,\chi})$ is taken of the form 
\[
L(s,E_{\kappa,\,\chi})=\zeta(s) L(s-\kappa+1,\chi). 
\] 

Let $p$ be a fixed prime number not dividing $M$, which we assume to be odd for simplicity. 
Put 
\begin{equation}
E_{\kappa,\,\chi}^{*}(z)
:=E_{\kappa,\,\chi}(z) - \chi(p)\,p^{\kappa-1} E_{\kappa,\,\chi}(pz).
\end{equation}
We easily see that 
$E_{\kappa,\,\chi}^{*}$ is also a non-cuspidal Hecke eigenform of weight $\kappa$ 
and nebentypus character $\chi$ for $\Gamma_0(Mp) \,(\subset \Gamma_0(M))
$, and its 
Fourier expansion is taken of the form
\begin{equation}
E_{\kappa,\,\chi}^{*}(z)={L^{\{p\}}(1-\kappa,\chi) \over 2}+\sum_{m=1}^{\infty}
\sigma_{\kappa-1,\,\chi}^{\{p\}}(m) \,q^m, 
\end{equation}
where $L^{\{p\}}(s,\chi):=(1-\chi(p)\,p^{-s})\,L(s,\chi)=
\prod_{l\ne p} (1-\chi(l)\,l^{-s})^{-1}$ and 
\[
\sigma_{\kappa-1,\,\chi}^{\{p\}}(m):=\sum_{\scriptstyle 0<d\, \mid\,m, \atop 
{\scriptstyle \gcd(d,p)=1}} \chi(d)\,d^{\kappa-1}. 
\]
It turns out that for $E_{\kappa,\,\chi}^{*}$, all the Hecke eigenvalues outside $p$ agree with $E_{\kappa,\,\chi}$,   
but the eigenvalue of Atkin's $U_p$-operator, which is converted from the $p$-th Hecke operator $T_p$ (cf. \cite{A-L70, Miy06}), is 
$\sigma_{\kappa-1,\,\chi}^{\{p\}}(p)=1$. Namely, we have 
\[
L(s,E_{\kappa,\,\chi}^{*})=
\zeta(s)L^{\{p\}}(s-\kappa+1,\chi). 
\]
This type of 
normalization $E_{\kappa,\,\chi} \mapsto E_{\kappa,\,\chi}^{*}$ given by 
eliminating the $p$-part of every Fourier coefficient or equivalently, eliminating 
the latter half of the Euler factor at $p$ was firstly introduced by Serre \cite{Ser73}, and 
we refer to it as the {\it ordinary\footnote{In general, a Hecke eigenform $f$ is said to be {\it ordinary} at $p$ if the eigenvalue of $U_p$ (or $T_p$) 
is a $p$-adic unit. } $p$-stabilization}. 
Here we should mention that every Fourier coefficient of $E_{\kappa,\,\chi}^*$ depends on the weight $\kappa$ $p$-adically. 
Indeed, it follows immediately from Fermat's little theorem that for each positive integer $m$, 
the function $\kappa \mapsto \sigma_{\kappa-1,\,\chi}^{\{p\}}(m)$ can be extended to an 
analytic function defined on the ring of $p$-adic integers $\mathbb{Z}_p$. 
In addition, the constant term $L^{\{p\}}(1-\kappa,\chi)/2$ is also interpolated by the $p$-adic $L$-function in the sense of Kubota-Leopoldt 
or Deligne-Ribet \cite{D-R80}.  
This fact turns out to be the following theorem due to Hida and Wiles: 

\begin{fact}[cf. Proposition 7.1.1 in \cite{Hid93}; Proposition 1.3.1 in \cite{Wil88}]

{\it Suppose that a Dirichlet character $\chi$ modulo $M$ is either trivial or primitive, and 
$M$ is not divisible by $p$. 
Let 
$K=\mathbb{Q}_p(\chi)$ be the field obtained by adjoining all values of $\chi$ to 
the $p$-adic number field $\mathbb{Q}_p$ 
and $\mathcal{L}=K[[X]]$ the power series ring in one variable $X$ over $K$, respectively. 
For each integer $a$ with $0 \le a < p-1$, 
there exists a formal Fourier expansion 
\[
\mathcal{E}_{\chi{\omega^a}}(X)
\,(=\mathcal{E}_{\chi{\omega^a}}^{(1)}(X))
\if0
\mathcal{E}_{\omega^a}(X)
(=\mathcal{E}_{\omega^a}^{(1)}(X))
\fi
=\sum_{m=0}^{\infty} \mathcal{A}_{\chi\omega^a}(m;\,X)\, q^m \in 
\mathcal{L}
[[q]],
\]
where $\omega
: \mathbb{Z}_p^{\times} \twoheadrightarrow \mu_{p-1}=\{ x \in \mathbb{Z}_p^{\times} \mid x^{p-1}=1\}
$\footnote{Since $\mu_{p-1} \simeq (\mathbb{Z}/p\mathbb{Z})^{\times}$, $\omega$ can be identified with a Dirichlet character modulo $p$ in a natural way. } denotes the Teichm\"uller character,
such that for each integer $\kappa>2$ with $\chi(-1)=(-1)^{\kappa}$ and $\kappa\equiv a \pmod{p-1}$\footnote{The latter condition is regarded as if $\omega^{a-\kappa}$ corresponds to the trivial Dirichlet character. }, we have 
\[
\mathcal{E}_{\chi{\omega^a}}
((1+p)^{\kappa} -1)=E_{\kappa,\,\chi}^{*}. 
\]
Moreover, if $\varepsilon : 1+p\mathbb{Z}_p \to \overline{\mathbb{Q}}_p^\times$ is a 
character having exact order $p^r$ for some integer $r \ge 0$\footnote{Namely, $\varepsilon$ can be also regarded as a primitive Dirichlet character of conductor $p^{r+1}$. }, then for each integer $\kappa\ge 2$, 
we have 
\[
\mathcal{E}_{\chi{\omega^a}}
(\varepsilon(1+p)(1+p)^{\kappa} -1)=E_{\kappa,\,\chi\omega^{a-\kappa}\varepsilon}
\]
as long as $\omega^{a-\kappa}\varepsilon$ is non-trivial. }
\end{fact}

Strictly speaking, it turns out that $\mathcal{E}_{\chi\omega^a}(X) \in \Lambda[[q]]$, where $\Lambda=\mathcal{O}_{K}[[X]]$, unless $\chi\omega^a$ is trivial. 
However, we note that if $\chi\omega^a$ is trivial, then $\mathcal{E}_{\chi\omega^a}
(X)$ (more precisely, the constant term $\mathcal{A}_{\chi\omega^a}(0;\,X)$) has a simple pole 
at $X=0$ and   
$\overline{\mathcal{E}}_{\chi\omega^a}
(X):=X\cdot\mathcal{E}_{\chi\omega^a}
(X) \in \Lambda[[q]]$. 
Hence the above-mentioned fact implies 
that $\mathcal{E}_{\chi\omega^a}(X)$ or $\overline{\mathcal{E}}_{\chi\omega^a}(X)$ according as $\chi\omega^a$ is non-trivial or trivial, is a $\Lambda$-adic form of level $Mp^{\infty}$ with nebentypus character $\chi\omega^a$, which interpolates $p$-adic families of non-cuspidal ordinary Hecke eigenforms
$\{E_{\kappa,\,\chi}^*\}$ and $\{E_{\kappa,\,\chi\omega^{a-\kappa}\varepsilon}\}$ 
or their constant multiples, 
given by varying the weight $\kappa$ 
$p$-adically analytically (cf. \cite{Wil88,Hid93}). 
In this context, we refer to it as the {\it ordinary $\Lambda$-adic Eisenstein series} of genus 1 and level $Mp^{\infty}$ with character $\chi\omega^a$. 

\vspace*{2mm}
The aim of the present article is to 
formulate a similar statement in the case of Siegel modular forms, that is, 
automorphic forms on the symplectic 
group ${\rm Sp}(2n)_{/\mathbb{Q}}$ of an arbitrary genus $n\ge1$. 
Let us explain how it goes briefly: 
Given a positive integer $\kappa>n+1$ and a Dirichlet character $\chi$ modulo $M$, 
let $E_{\kappa,\,\chi}^{(n)}$ be the classical (holomorphic) Siegel Eisenstein series  
of weight $\kappa$ and nebentypus character $\chi$ for $\Gamma_0(M)^{(n)} \subset {\rm Sp}(2n,\mathbb{Z})$ to be described in the subsequent \S2, 
whenever either 
\begin{enumerate}
\item[{\rm (i)}] $M=1$ and thus, $\chi$ is trivial or \smallskip
\item[{\rm (ii)}] $M>1$ is odd, $\chi$ is primitive and 
$\chi^2$ is locally non-trivial at every prime $l \mid M$\footnote{Namely, 
if we factor $\chi$ as $\chi = \prod_{l\,\mid\, M} \chi_l$, then for each prime $l$, $\chi_l$ is not a quadratic character. }. 
\end{enumerate} 
For any prime number $p \nmid M$ (not necessarily odd), we define, in \S4, a certain 
$p$-stabilization map $E_{\kappa,\,\chi}^{(n)} \mapsto (E_{\kappa,\,\chi}^{(n)})^*$ 
by means of the action of a linear combination of $(U_{p,n})^i$\,'s for $i=0,1, \cdots, n$, 
where $U_{p,n}$ denotes a generalized Atkin 
$U_p$-operator to be defined in \S 3 below, 
so that the associated eigenvalue of $U_{p,n}$ is $1$. 
This formulation is natural 
from the viewpoint of which 
in the case where $n=1$, $E_{\kappa,\,\chi}^{*}=(E_{\kappa,\,\chi}^{(1)})^*$ can be easily written 
in terms of Atkin's original operator $U_p=U_{p,1}$ as 
\begin{equation}
E_{\kappa,\,\chi}^{*} = E_{\kappa,\,\chi} \,\|_{\kappa}\, (U_p -\chi(p)\,p^{\kappa-1}). \tag{1'}
\end{equation}
Following in the tradition of Skinner-Urban \cite{S-U06}, in the case where $n >1$, 
we call $(E_{\kappa,\,\chi}^{(n)})^*$ {\it semi-ordinary} at $p$. 
Moreover, we derive an explicit formula for all Fourier coefficients of $(E_{\kappa,\,\chi}^{(n)})^*$, 
which can be regarded 
as a natural generalization of the equation (2) (cf. Theorem 4.2 below). 
As a consequence of the above issues, we first show in Theorem 4.4 below the existence of a formal Fourier expansion 
with coefficients in $\mathcal{L}={\rm Frac}(\Lambda)$ associated to $\chi\omega^a$ with some nonnegative integer $a$, 
whose specialization at $X=(1+p)^\kappa -1$ coincides with the semi-ordinary $p$-stabilized Siegel Eisenstein series $(E_{\kappa,\,\chi}^{(n)})^*$ 
for any $\kappa$ taken as above.  
Finally in \S 5, for a fixed odd prime number $p$, we show that after taking a suitable constant multiple,  
the above-mentioned formal Fourier expansion 
is indeed a $\Lambda$-adic form of genus $n$ and level $Mp^{\infty}$ (or tame level $M$ in the sense of Taylor \cite{Tay88}) 
with character $\chi\omega^a$, 
which can be viewed as a satisfactory generalization of Fact 1.1 (cf. Theorem 5.4 below). 

\vspace*{3mm}
\begin{acknow}
The author is deeply grateful to Professors 
S. B{\"o}cherer, H. Hida, T. Ikeda, H. Katsurada, A.A. Panchishkin, V. Pilloni, 
R. Schulze-Pillot and J. Tilouine  
for their valuable suggestions and comments. 
This research was partially supported by the JSPS Grant-in-Aid for Young Scientists (No.26800016). 
\end{acknow}

\smallskip

\begin{notation}
We summarize here some notation we will use in the sequel. 
We denote by $\mathbb{Z},\, \mathbb{Q},\, \mathbb{R},$ and $\mathbb{C}$ the ring of integers, fields of rational numbers, real numbers and complex numbers, 
respectively. Let $\overline{\mathbb{Q}}$ denote the algebraic closure of $\mathbb{Q}$ sitting inside $\mathbb{C}$. 
We put $e(x)=\exp(2\pi \sqrt{-1}x)$ for $x \in \mathbb{C}$. 
Given a prime number $p$, we denote by $\mathbb{Q}_p$, $\mathbb{Z}_p$ and $\mathbb{Z}_p^{\times}$ 
the field of $p$-adic numbers, the ring of $p$-adic 
integers and the group of $p$-adic units, 
respectively. 
Hereinafter, given a prime number $p$, we fix an algebraic closure $\overline{\mathbb{Q}}_p$ of $\mathbb{Q}_p$ 
and an embedding $\iota_p : \overline{\mathbb{Q}}\hookrightarrow \overline{\mathbb{Q}}_p$ 
once for all. Let ${\rm val}_p$ denote the $p$-adic valuation on $\overline{\mathbb{Q}}_p$ 
normalized so that ${\rm val}_p(p)=1$, and $|*|_p$ the corresponding norm on $\overline{\mathbb{Q}}_p$, respectively.  
Let $e_p$ be the continuous additive character of $\overline{\mathbb{Q}}_p$ such that $e_p(x)=e(x)$ for all $x \in \mathbb{Q}$. 
Let $\mathbb{C}_p$ be the completion of the normed space $(\overline{\mathbb{Q}}_p, |*|_p)$.  
We also fix, once for all, an isomorphism 
$\widehat{\iota}_p: \mathbb{C} \stackrel{\sim}{\to} \mathbb{C}_p$ 
such that the diagram \vspace*{-1mm}
\[\vspace*{-3mm}\begin{array}{ccc}
\mathbb{C} & \hspace*{-5mm}
\underset{\simeq}{\stackrel{\widehat{\iota}_p}{\to}
}
\hspace*{-5mm} & \mathbb{C}_p \\
\rotatebox{90}{$\hookrightarrow$} & 
& \rotatebox{90}{$\hookrightarrow$} \\
\overline{\mathbb{Q}} & \hspace*{-5mm}\underset{\iota_p}{\hookrightarrow}\hspace*{-5mm} & \overline{\mathbb{Q}}_p
\end{array} \vspace*{2mm}
\]
is commutative. Given a prime number $p$, put 
\[
{\bf p}=\left\{\begin{array}{cl}
4 & \text{ if $p=2$}, \\
p & \text{ otherwise}. 
\end{array}\right.
\]
Let $\mathbb{Z}_{p,{\rm tor}}^{\times}
$ be the torsion subgroup of $\mathbb{Z}_p^{\times}$, that is, $\mathbb{Z}_{2,{\rm tor}}^{\times}
=\{\pm 1\}$ and $\mathbb{Z}_{p,{\rm tor}}^{\times}=\mu_{p-1}$ 
if $p \ne 2$. We define the 
Teichm\"uller character 
$\omega : \mathbb{Z}_p^{\times} \twoheadrightarrow \mathbb{Z}_{p,{\rm tor}}^{\times}$ by putting $\omega(x)=\pm 1$ according as $x \equiv \pm 1 \pmod{4\mathbb{Z}_2}$ 
if $p=2$, and 
$\omega(x)=\displaystyle\lim_{i \to \infty} x^{p^i}$ for $x \in \mathbb{Z}_p^{\times}$ if $p \ne 2$. 
In addition, for each $x \in \mathbb{Z}_p^{\times}$, put $\langle x \rangle:=\omega(x)^{-1}x \in 1+{\bf p}\mathbb{Z}_p$. Then we have a canonical isomorphism 
\vspace*{-2mm} 
\[
\begin{array}{ccc}
\mathbb{Z}_p^{\times} &\stackrel{\sim}{\to}& \mathbb{Z}_{p,{\rm tor}}^{\times}
\times (1 + {\bf p} \mathbb{Z}_p) \\[1mm]
x &\mapsto& (\omega(x),\, \langle x \rangle). 
\end{array}
\vspace*{-1mm}
\]
We note that the maximal torsion-free subgroup 
$1+{\bf p}\mathbb{Z}_p$ of $\mathbb{Z}_p^{\times}$ is topologically cyclic, that is, 
$1+{\bf p}\mathbb{Z}_p=(1+{\bf p})^{\mathbb{Z}_p}$. 
As already mentioned in \S1, the Teichm\"uller character $\omega$ gives rise to a Dirichlet character $\omega:(\mathbb{Z}/{\bf p} \mathbb{Z})^{\times} \simeq \mathbb{Z}_{p,{\rm tor}}^{\times}  
\hookrightarrow \mathbb{C}_p^{\times} \simeq \mathbb{C}^{\times}$. 
Let $\varepsilon : 1+{\bf p}\mathbb{Z}_p \to 
\overline{\mathbb{Q}}_p^{\times}$ be a character of finite order. More precisely, if
$\varepsilon$ has exact order $p^m$ for some nonnegative integer $m$, then $\varepsilon$ optimally factors through 
\[
1+{\bf p}\mathbb{Z}_p/(1+{\bf p}\mathbb{Z}_p)^{p^m} 
\simeq (1+{\bf p})^{\mathbb{Z}_p}/(1+{\bf p})^{p^{m}\mathbb{Z}_p} 
\simeq \mathbb{Z}_p/p^{m}\mathbb{Z}_p 
\simeq \mathbb{Z}/p^m \mathbb{Z}.
\]  
Since $(\mathbb{Z}/p^{m}{\bf p} \mathbb{Z})^{\times} \simeq (\mathbb{Z}/{\bf p} \mathbb{Z})^{\times} \times \mathbb{Z}/p^m \mathbb{Z}$, we may naturally regard $\varepsilon$ as a Dirichlet character  
of conductor $p^{m}{\bf p}$. 
By abuse of notation, we will often identify such $p$-adic characters $\omega$ and $\varepsilon$ with 
the corresponding Dirichlet characters whose conductor is a powers of $p$ in the sequel. 

\medskip

Given a positive integer 
$n$, let ${\rm GSp}(2n)
$ be the group of symplectic similitudes over $\mathbb{Q}$, that is, 
\[
{\rm GSp}(2n)
:=
\left\{\, g \in {\rm GL}(2n)\,
\right|  \left. {}^t g \,
J\,
g = \nu(g) 
J 
\textrm{ for some } \nu(g) \in \mathbb{G}_m
\,\right\}
, \]
where 
$
J =
\left[
\begin{smallmatrix}
0_n
 & -1_n \\
1_n & 
0_n
\end{smallmatrix}
\right] 
$ with the $n \times n$ unit (resp. zero) matrix $1_n$ {(resp. $0_n$)}, 
and 
${\rm Sp}(2n)$ the derived group of ${\rm GSp}(2n)$ characterized by the exact sequence 
\[
1 \to {\rm Sp}(2n) \to {\rm GSp}(2n) \stackrel{\nu}{\to} \mathbb{G}_m
 \to 1, 
\]
respectively. Namely, ${\rm GSp}(2)={\rm GL}(2)$ and ${\rm Sp}(2)={\rm SL}(2)$ in this setting. 
\if0
We denote by 
${\rm P}_{2n}$ the Siegel parabolic subgroup of ${\rm Sp}(2n)$, which has a Levi decomposition 
${\rm P}_{2n}={\rm M}_{2n}{\rm N}_{2n}$ 
with  
\begin{eqnarray*}
{\rm M}_{2n}&=&\left\{ \left.
{\rm m}(A):=\left[\begin{array}{cc}
A & 0_n \\
0_n & {}^t\!A^{-1}
\end{array}\right] \ \right| A \in {\rm GL}(n) \right\}, \\[2mm]
{\rm N}_{2n}&=&\left\{ \left.
{\rm n}(B):=
\left[\begin{array}{cc}
1_n & B \\
0_n & 1_n
\end{array}\right] \ \right| B \in {\rm Sym}_{n}(\mathbb{Q}) \right\}.
\end{eqnarray*}
Throughout the present article, we often identify ${\rm GL}(n)$ 
(resp.\,\,${\rm Sym}_n(\mathbb{Q})$) with ${\rm M}_{2n}$ (resp.\,\,${\rm N}_{2n}$) via $A \mapsto {\rm m}(A)$ (resp.\,\,$B \mapsto {\rm n}(B)$). 
\fi
We note that every real point $g=\left[\begin{smallmatrix}
A & B \\
C & D
\end{smallmatrix}
\right] \in {\rm GSp}(2n,\mathbb{R})$ with $\nu(g)>0$, where $A,\,B,\,C,\,D \in {\rm Mat}_{n \times n}(\mathbb{R})$, acts on the Siegel upper-half space 
\[
\mathfrak{H}_n := \left\{ Z =X+\sqrt{-1}\,Y\in {\rm Mat}_{n\times n}(\mathbb{C}) \left|\, {}^t Z =Z,\, 
Y > 0\ (\textrm{positive-definite})\right.\right\}
\]
of genus $n$ via the linear transformation $Z \mapsto g \langle Z \rangle=(AZ+B)(CZ+D)^{-1}$. 
If $F$ is a function on $\mathfrak{H}_n$, then for each $\kappa \in \mathbb{Z}$, 
we define 
the slash action of $g$ on $F$ by 
\[
(F\, \|_{\kappa}\, g)(Z)
:= \nu(g)^{n\kappa-n(n+1)/2} 
\det(CZ+D)^{-\kappa} F(g \langle Z \rangle).
\] 
For each positive integer $N$, we shall consider the following congruence subgroups of level $N$
for the full-modular group ${\rm Sp}(2n,\mathbb{Z})$: 
\[
\Gamma_0(N)^{(n)} \, ({\rm resp. }\ \Gamma_1(N)^{(n)})
:=\left\{ 
\gamma \in {\rm Sp}(2n,\mathbb{Z}) \left|\,\,
\gamma \equiv 
\left[\begin{array}{cc}
* & * \\
0_{n} & *
\end{array}\right] \left({\rm resp.}\left[\begin{array}{cc}
* & * \\
0_{n} & 1_{n}
\end{array}\right]\right) \!\!\!\pmod{N} 
\right. \right\}.
\]
\if0
\begin{enumerate}[\upshape (i)]
\item[] $\Gamma_0(N)^{(n)}
:=\left\{ 
\gamma \in {\rm Sp}_{2n}(\mathbb{Z}) \left|\,\,
\gamma \equiv 
\left[\begin{array}{cc}
* & * \\
0_{n} & *
\end{array}\right] \pmod{N} 
\right. \right\}$,\\[1mm]

\item[] $\Gamma_1(N)^{(n)}
:=\left\{ 
\gamma \in {\rm Sp}_{2n}(\mathbb{Z}) \left|\,\,
\gamma \equiv 
\left[\begin{array}{cc}
* & * \\
0_{n} & 1_{n}
\end{array}\right] \pmod{N} 
\right. \right\}$. 
\end{enumerate} 
\fi
For each $\kappa \in \mathbb{Z}
$, 
let us denote by 
$\mathscr{M}_{\kappa}(
\Gamma_1(N)
)^{(n)}
$ 
the space of ({\it holomorphic}) 
{\it Siegel modular forms} of genus $n$, weight $\kappa$ 
and level $N$, that is, $\mathbb{C}$-valued holomorphic functions $F$ on $\mathfrak{H}_{n}$ satisfying the following two conditions: 
\begin{center}
\vspace*{-2mm}
\begin{minipage}[t]{0.975\textwidth}
\begin{enumerate}[\upshape (i)]

\item $F\, \|_{\kappa}\, \gamma = F$ \, for any 
$\gamma \in 
\Gamma_1(N)^{(n)}
$. \vspace*{1mm}

\item For each $\gamma \in {\rm Sp}(2n,\mathbb{Z})$, the function $F\, \|_{\kappa}\, \gamma$ possesses a Fourier expansion of the form
\[
(F\, \|_{\kappa}\, \gamma)(Z) = 
\displaystyle\sum_{T \in {\rm Sym}_n^{*}(\mathbb{Z})}
A_{F,\gamma}(T)\, e({\rm tr}(TZ)),
\]

\item[\hspace*{6mm}] 
where 
${\rm Sym}_n^{*}(\mathbb{Z})$ is 
the set of all half-integral symmetric matrices of degree $n$ over $\mathbb{Z}$, namely,     
\[
\hspace*{10mm}
{\rm Sym}_n^{*}(\mathbb{Z}):=
\{T=\left[\,t_{ij}\,\right] \in {\rm Sym}_{n}(\mathbb{Q})\,|\,
t_{ii},\,2t_{ij} \in \mathbb{Z}\,\,(1 \leq i < j \leq n)
\},\]
and ${\rm tr}(*)$ denotes the trace.  
Then it is satisfied that 
\begin{center}
$A_{F,\gamma}(T)=0$ unless $T \ge 0$ (positive-semidefinite) 
\end{center}
for all $\gamma \in {\rm Sp}(2n,\mathbb{Z})$. 
\end{enumerate}
\end{minipage}
\end{center}
A Siegel modular form $F \in \mathscr{M}_{\kappa}(\Gamma_1(N))^{(n)}$ is said to be {\it cuspidal} 
if it is satisfied that $A_{F,\gamma}(T) = 0$ unless $T > 0$ 
for all $\gamma\in {\rm Sp}(2n,\mathbb{Z})$. We denote by $\mathscr{S}_{\kappa}(\Gamma_1(N))^{(n)}$ the subspace of $\mathscr{M}_{\kappa}(\Gamma_1(N))^{(n)}$ consisting of all cuspidal forms. 
Given a Dirichlet character $\chi : (\mathbb{Z}/N\mathbb{Z})^{\times} \to \mathbb{C}^{\times}$, we denote by  
$\mathscr{M}_{\kappa}(\Gamma_0(N),\,\chi)^{(n)}$ (resp. $\mathscr{S}_{\kappa}(\Gamma_0(N),\,\chi)^{(n)}$) the subspace of $\mathscr{M}_{\kappa}(\Gamma_1(N))^{(n)}$ (resp. $\mathscr{S}_{\kappa}(\Gamma_1(N))^{(n)}$) consisting of all forms $F$ with nebentypus character $\chi$, that is, 
\[
F\, \|_{\kappa}\, \gamma = \chi(\det D) F \, \textrm{ for any } \gamma
=\left[
\begin{smallmatrix}
  A & B  \\
  C & D  \\
\end{smallmatrix}
\right] 
\in \Gamma_0(N)^{(n)}.   
\]
In particular, whenever $\chi$ is trivial, we naturally write 
$\mathscr{M}_{\kappa}(\Gamma_0(N))^{(n)}=\mathscr{M}_{\kappa}(\Gamma_0(N), \,{\rm triv})^{(n)}$ 
and ${\mathscr{S}_{\kappa}(\Gamma_0(N))^{(n)}=\mathscr{S}_{\kappa}(\Gamma_0(N),\,{\rm triv})^{(n)}}$, 
respectively. 

\medskip

For a given pair of $Z=[\,z_{ij}\,] \in \mathfrak{H}_n$ and $T=[\,t_{ij}\,] \in {\rm Sym}_n^{*}(\mathbb{Z})$, put
\[
{\bf q}^{T}:=e({\rm tr}(TZ))
=\prod_{i=1}^{n} q_{ii}^{t_{ii}} \prod_{i<j \leq n} q_{ij}^{2t_{ij}}, 
\]
where $q_{ij}=e(z_{ij})\, (1\leq i \leq j \leq n)$. 
Since $\left[\begin{smallmatrix}
1_n & S \\
0_n & 1_n
\end{smallmatrix}\right] \in \Gamma_1(N)^{(n)} \subset \Gamma_0(N)^{(n)}$ for each $S \in {\rm Sym}_n(\mathbb{Z})$, 
we easily see that if $F \in \mathscr{M}_{\kappa}(
\Gamma_1(N)
)^{(n)}$ (or $\mathscr{M}_{\kappa}(\Gamma_0(N), \chi)^{(n)}$), then 
$F$ possesses a Fourier 
expansion of the form
\[
F(Z)=\sum_{\scriptstyle T \in {\rm Sym}_n^{*}(\mathbb{Z}), 
\atop {\scriptstyle T \ge 0}
} A_{F}(T)\, 
{\bf q}^{T},    
\]
which is regarded as belonging to the ring $\mathbb{C}[\,q_{ij}^{\pm 1}\,|\,1 \leq i < j \leq n\,][[q_{11},\cdots,q_{nn}]]$. 
Given a ring $R$, 
we write 
$$R[[{\bf q}]]^{(n)}:=R[\,q_{ij}^{\pm 1}\,|\,1 \leq i < j \leq n\,][[q_{11},\cdots,q_{nn}]],$$ 
in a similar fashion to the notation of the 
ring of formal $q$-expansions 
$
R[[q]]$.  
In particular, if $F\in \mathscr{M}_{\kappa}(
\Gamma_1(N)
)^{(n)}$ is a Hecke eigenform (i.e., a simultaneous eigenfunction of all Hecke operators whose similitude is coprime to $N$), then it is well-known that 
the field $K_F$ obtained by adjoining all Fourier coefficients (or equivalently, all Hecke eigenvalues) of $F$ to $\mathbb{Q}$ is an algebraic 
number field.  
Thus, by virtue of the presence of $\iota_p$ and $\widehat{\iota}_p$, we may regard 
$F \in K_F[[{\bf q}]]^{(n)}$ as sitting inside $\mathbb{C}[[{\bf q}]]^{(n)}$ and $\mathbb{C}_p[[{\bf q}]]^{(n)}$ interchangeably. 
For further details on the basic theory of Siegel modular forms set out above, see \cite{A-Z95} or \cite{Fre83}. In particular, a comprehensive introduction to the theory of elliptic modular forms and Hecke operators can be found in \cite{Miy06}. 
\end{notation}

\section{Siegel Eisenstein series for symplectic groups}

In this section, we review some elementary facts on the Siegel Eisenstein series 
defined for ${\rm Sp}(2n)_{/\mathbb{Q}}$ of an arbitrary genus $n \ge 1$. 
In particular, we describe a explicit form of its Fourier expansion according to some previous works of Shimura (e.g., \cite{Shim82,
Shim94b}), which is the starting point for the subsequent arguments. 

\vspace*{3mm}
Let $N$ be a positive integer and $\chi : (\mathbb{Z}/N\mathbb{Z})^{\times} \to \mathbb{C}^{\times}
$ a Dirichlet character, respectively. As mentioned in \S 1, for simiplicity, we restrict ourselves to either of the following cases:
\if0
\begin{center}
(i) $N=1$, that is, $\chi$ is trivial;  \ (ii) $N>1$ is odd, $\chi$ is primitive and $\chi^2$ is locally non-trivial.
\end{center}
\fi
\begin{enumerate}
\item[{\rm (i)}] $N=1$, that is, $\chi$ is trivial; \vspace*{1mm}
\item[{\rm (ii)}] $N>1$ is odd, $\chi$ is primitive and 
$\chi^2$ is locally non-trivial at every prime $l \mid N$. 
\end{enumerate} 
Given a positive integer $n$, if $\kappa$ is an integer with 
$\kappa> n+1$ and $\chi(-1)=(-1)^{\kappa}$, 
then the ({\it holomorphic}) 
{\it Siegel Eisenstein series} 
of genus $n$, weight $\kappa$ and level $N$ with nebentypus character $\chi$ 
is defined as follows: 
For each $Z \in \mathfrak{H}_{n}$, put 
\begin{eqnarray*}
E_{\kappa,\,\chi}^{(n)}(Z)
&:=& 2^{-[(n+1)/2]
} L(1-\kappa,\, \chi) \prod_{i=1}^{[n/2]} L(1-2\kappa+2i,\, \chi^2) \\
&{}& \times \sum_{\gamma=\left[\begin{smallmatrix}
* & * \\
C & D
\end{smallmatrix}\right] \in (P_{2n}\, \cap\, \Gamma_0(N)^{(n)})
 \backslash \Gamma_0(N)^{(n)}
 } {\chi}^{-1}(\det D) \det(CZ+D)^{-\kappa},
\end{eqnarray*}
where $L(s,\, \psi)$ denotes Dirichlet's $L$-function associated with $\psi$, and 
$P_{2n}$ the Siegel parabolic subgroup of ${\rm Sp}(2n)$ consisting of all matrices $g = \left[\begin{smallmatrix}
* & * \\
0_n & *
\end{smallmatrix}\right]$, respectively. 
\if0 %
that is, 
${\rm P}_{2n}=\{
\left(\begin{smallmatrix}
* & * \\
0_g & *
\end{smallmatrix}\right) \in {\rm 
Sp}_{2n}
\}.$  
\fi %

\vspace*{3mm}
Let $r$ be a positive integer. 
For each rational prime $l$, let ${\rm Sym}_{r}^{*}(\mathbb{Z}_l)$ denote the set of all half-integral symmetric matrices of degree $r$ over $\mathbb{Z}_l$. 
Given a nondegenerate $S \in {\rm Sym}_r^{*}(\mathbb{Z}_l)$, 
we define a formal power series $b_l(S;\,X)$ in $X$ by  
\[
b_l(S;\,X):=
\sum_{R \in {\rm Sym}_{r}(\mathbb{Q}_l)/{\rm Sym}_{r}(\mathbb{Z}_l)} e_l({\rm tr}(SR)) 
X^{{\rm val}_l(\mu_R)},
\]
where $\mu_R=[\mathbb{Z}_l^{r} + \mathbb{Z}_l^{r} R : \mathbb{Z}_l^{r}]$. 
Put $\mathfrak{D}_S:=
2^{2[r/2]} \det S
$. 
We note that 
if $r$ is even, 
then $(-1)^{r/2}\mathfrak{D}_S \equiv 0 \text{ or }1 \pmod{4}$, and thus, we may decompose it 
into the form 
$$(-1)^{r/2}\mathfrak{D}_S = \mathfrak{d}_S\,\mathfrak{f}_S^2,$$ 
where $\mathfrak{d}_S$ is the fundamental discriminant 
of the quadratic field extension $\mathbb{Q}_l(\{(-1)^{r/2}\mathfrak{D}_S\}^{1/2})/\mathbb{Q}_l
$ and $\mathfrak{f}_S=\{(-1)^{r/2}\mathfrak{D}_S/\mathfrak{d}_S\}^{1/2} \in \mathbb{Z}_l$. 
Let $\xi_l : \mathbb{Q}_l^{\times} \to \{\pm 1,\,0\}$ denote the character defined by 
\[
\xi_l(x)
=\left\{
\begin{array}{cl}
1 & \text{if\, } {\mathbb{Q}_l(x^{1/2})}={\mathbb{Q}_l}, \\[0.75mm]
-1 & \text{if\, } {\mathbb{Q}_l(x^{1/2})}/{\mathbb{Q}_l} \text{ is unramified}, \\[0.75mm]
0 & \text{if\, } {\mathbb{Q}_l(x^{1/2})}/{\mathbb{Q}_l} \text{ is ramified}. 
\end{array}
\right.
\]
As shown in \cite{
Kit84,
Fei86, Shim94b}, for each nondegenerate $S \in {\rm Sym}_{r}^{*}(\mathbb{Z}_l)$, 
there exists a polynomial $F_l(S;\,X) \in \mathbb{Z}[X]$ whose constant term is $1$ 
such that $b_l(S;\,X)$ is decomposed as follows: 
\begin{equation}
b_l(S;\,X)
=
F_l(S;\,X)\times
\left\{
\begin{array}{ll}
\displaystyle{(1-X)
\prod_{i=1}^{r/2} (1-l^{2i} X^2) \over 1- 
\xi_l((-1)^{r/2}\det S
)\,
l^{r/2} X} & \textrm{ if $r$ is even}, \\[5mm]
(1-X)\prod_{i=1}^{(r-1)/2} (1-l^{2i} X^2) & \textrm{ if $r$ is odd}
\end{array}\right.
\end{equation}
(cf. Proposition 3.6 in \cite{Shim94b}). 
%
We note that 
$F_l(S;\,X)$ satisfies the functional equation 
\begin{equation}
F_l(S;\, l^{-r-1} X^{-1})=F_l(S;\, X) \times
\left\{
\begin{array}{ll}
(l^{r+1} X^2)^{- {\rm val}_l(\mathfrak{f}_S)}  & \textrm{ if $r$ is even}, \\[3mm]
\eta_l(S)(l^{(r+1)/2} X)^{- {\rm val}_l(\mathfrak{D}_S)}  & \textrm{ if $r$ is odd},
\end{array}
\right.
\end{equation}
where 
\[
\eta_l(S):=h_l(S)\, (\det S,\,(-1)^{(r-1)/2}\det S)_l\, (-1,\,-1)_l^{(r^2-1)/8}
 \]
in terms of the Hasse invariant $h_l(S)$ in the sense of Kitaoka \cite{Kit84}
and the Hilbert symbol $(*,*)_l$ defined over $\mathbb{Q}_l$ (cf. Theorem 3.2 in \cite{Kat99}). 
Thus, it turns out that $F_l(S;\,X)$ has degree $2{\rm val}_l(\mathfrak{f}_S)$ or ${\rm val}_l(\mathfrak{D}_S)$ 
according as $r$ is even or odd. 
We easily see that $F_l(uS;\, X)=F_l(S;\,X)$ for each $u \in \mathbb{Z}_l^{\times}$, 
and that if $S$, $T \in {\rm Sym}_r^*(\mathbb{Z}_l)$ are equivalent over $\mathbb{Z}_l$, that is, 
$T={}^t U S U$ for some $U \in {\rm GL}(r, \mathbb{Z}_l)$, then $F_l(S;\,X)=F_l(T;\,X)$. 
For further details on the above-mentioned issues, see \cite{Kat99}. 

\begin{lem}
Let $n$, $\kappa$, $N$ and $\chi$ be taken as above. 
\begin{itemize}
\item[(I)] $E_{\kappa,\,\chi}^{(n)}
\in \mathscr{M}_{\kappa}(\Gamma_0(N),\,\chi)^{(n)}$ and it is a 
Hecke eigenform, that is, a simultaneous eigenfunction of Hecke operators defined at least for all primes not dividing the level $N$. 

\item[(II)] 
Let us consider a Fourier expansion of $E_{\kappa,\,\chi}^{(n)}$ taken of the form 
\[
E_{\kappa,\,\chi}^{(n)}
(Z)=\sum_{\scriptstyle 
T \in {\rm Sym}_n^*(\mathbb{Z}), \atop {\scriptstyle 
T \ge 0}} A_{\kappa,\,\chi}(T)\,{\bf q}^T. 
\]
Then every coefficient $A_{\kappa,\,\chi}(T)$, which is invariant under $T \mapsto {}^t UTU$
for $U \in {\rm GL}(n,\mathbb{Z})$, is described as follows: \vspace*{1mm}
\begin{itemize}
\item[(IIa)] For $T=0_n \in {\rm Sym}_{n}^{*}(\mathbb{Z})$
, we have  
\[
A_{\kappa,\,\chi}(T)=2^{-[(n+1)/2]
} L(1-\kappa,\,\chi) \prod_{i=1}^{[n/2]} L(1-2\kappa+2i,\,\chi^2). 
\]
Therefore $E_{\kappa,\,\chi}^{(n)}$ is not cuspidal. \medskip

\item[(IIb)]If 
$T\in {\rm Sym}_n^{*}(\mathbb{Z})$ is taken of the form 
\[
T=\left[\begin{array}{c|c}
T' & {} \\ \hline
{} & 0_{n-r}
\end{array}\right]
\]
for some nondegenerate $T' \in {\rm Sym}_r^{*}(\mathbb{Z})$ with $0 < r \le n$ \text{\rm (i.e., ${\rm rank}\,T={\rm rank}\,T'=r$)}, then 
\begin{minipage}[t]{0.9\textwidth}
\begin{eqnarray*}
{A_{\kappa,\,\chi}(T)} 
&=& 2^{[(r+1)/2]-[(n+1)/2]} \prod_{i=[r/2]+1}^{[n/2]} L(1-2\kappa+2i,\chi^2) 
 \\ 
&&\times 
\left\{
\begin{array}{ll}
L(1-\kappa +r/2,\left({\mathfrak{d}_{T'} \over *}\right)\chi) 
\displaystyle\prod_{l\,\mid\, \mathfrak{f}_{T'} }  
F_l(T';\, \chi(l)\,l^{\kappa-r-1}) & \textrm{ if $r$ is even}, \\
 \displaystyle\prod_{l\,\mid\, \mathfrak{D}_{T'} }  
F_l(T';\, \chi(l)\,l^{\kappa-r-1}) & \textrm{ if $r$ is odd},  
\end{array}\right. 
\nonumber 
\end{eqnarray*}
\end{minipage}
\medskip

where 
$\left({\,\mathfrak{d}\, \over *}\right)$ denotes 
the Kronecker symbol.  
\end{itemize}
\end{itemize}

\end{lem}

\begin{rem}
For the convenience in the sequel, we make the convention for $r=0$, that 
$\mathfrak{D}_{S}=\mathfrak{d}_{S}=\mathfrak{f}_{S}=1$, and 
$F_l(S;\,X)=1$ for all primes $l$. 
This enables us to regard (IIa) as (IIb) for $r=0$. 
\end{rem}

\begin{proof}
The assertion (I) is well-known. 
The assertion (II) can be obtained by exploiting an idea of 
Shimura \cite{Shim94b} as follows: 
\if0
(i) Whenever $N=1$ (i.e., $\chi$ is trivial), the assertion (2) is Theorem 3.2 in \cite{Shim94b}. 
(ii) Whenever $N>1$ is odd, $\chi$ is primitive and $\chi^2$ is locally non-trivial, the assertion (2b) 
was also obtained by Takemori \cite{Tak15} in the following way:  
\fi 
Let $\mathbb{A}$ be the ring of adeles over $\mathbb{Q}$, $G_{2n}(\mathbb{A})={\rm Sp}(2n,\mathbb{A})$, 
$P_{2n}(\mathbb{A})=M_{2n}(\mathbb{A})N_{2n}(\mathbb{A})$ a Levi decomposition of the Siegel parabolic subgroup, where
\[
M_{2n} := \left\{ \left.
\left[\begin{array}{cc}
A & 0_n \\
0_n & {}^t\!A^{-1}
\end{array}\right] \ \right| A \in {\rm GL}(n) \right\}, \quad 
N_{2n}:=\left\{ \left.
\left[\begin{array}{cc}
1_n & B \\
0_n & 1_n
\end{array}\right] \ \right| {}^t B =B
\right\}, 
\]
and 
$\bfchi : {\mathbb{A}^{\times}/\mathbb{Q}^{\times}} \to \mathbb{C}^{\times}$ 
the unitary Hecke character corresponding to $\chi$, respectively. 
For each $s \in \mathbb{C}$, 
let 
${\rm Ind}_{P_{2n}(\mathbb{A})}^{G_{2n}(\mathbb{A})}(\bfchi \cdot |*|_{\mathbb{A}}^{s})$ denote the normalized smooth representation induced from the character 
of ${\rm GL}(n,\mathbb{A}) \simeq M_{2n}(\mathbb{A})$ defined by $A \mapsto \bfchi(\det A)\, |\det A|_{\mathbb{A}}^{s}$, 
where $|*|_{\mathbb{A}}$ denotes the 
norm on $\mathbb{A}$.  
Choosing a suitable section $\varphi^{(s)}
\in {\rm Ind}_{P_{2n}(\mathbb{A})}^{G_{2n}(\mathbb{A})}(\bfchi \cdot |*|_{\mathbb{A}}^{s})$, we define the Eisenstein series ${\bf E}
(\varphi^{(s)})
$ on $G_{2n}(\mathbb{A}
)$ by  
\[
{\bf E}
(\varphi^{(s)})(g):=\sum_{\gamma \in P_{2n}\backslash G_{2n}} 
\varphi^{(s)}(\gamma g),
\]
which converges absolutely for ${\rm Re}(s) \gg 0$, 
and 
${\bf E}
(\varphi^{(\kappa - (n+1)/2)})$
evaluates \[
2^{[(n+1)/2]}L(1-\kappa,\,\chi)^{-1}\prod_{i=1}^{[n/2]}L(1-2\kappa+2i,\,\chi^2)^{-1}
E_{\kappa,\,\chi}^{(n)}. \]
Thus, the desired equation can be obtained from an explicit formula for  
Fourier coefficients of ${\bf E}(\varphi^{(s)})$, more precisely, 
local Whittaker functions ${\rm Wh}_T(\varphi_v^{(s)})$ 
defined on $G_{2n}(\mathbb{Q}_v)$ 
for all places $v$ of $\mathbb{Q}$, 
where $\mathbb{A}=\prod_v \mathbb{Q}_v$, $\bfchi = \prod_v \bfchi_v$ and $\varphi^{(s)}=\prod_v \varphi_v^{(s)}$. 
Whenever $v$ is archimedean (i.e., $v=\infty$) or non-archimedean at which $\bfchi_v$ is unramified (i.e., $v$ is a prime $l$ not dividing $N$), 
it has been proved by Shimura (cf. Equations 4.34-35K in \cite{Shim82} and Proposition 7.2 in \cite{Shim94b}). 
Whenever $v$ is a non-archimedean place at which $\bfchi_v$ is ramified and $\bfchi_v^2$ is non-trivial, 
the local Whittaker function ${\rm Wh}_T(\varphi_v^{(s)})$ is described by Takemori \cite{Tak15} 
in which the argument relies upon a functional equation of ${\rm Wh}_T(\varphi_v^{(s)})$ due to Ikeda \cite{Ike17} (generalizing \cite{Swe95}). 
\end{proof}

\section{generalized Atkin $U_p$-operator}

\vspace*{2mm}

In this section, we recall the theory of Atkin's $U_p$-operator and its generalization. For further details on the facts set out below, 
see, for instance, \cite{A-Z95,Boe05,Tay88}. 

\medskip

Let $p$ be a prime number and $N$ a positive integer, respectively. 
If $p \mid N$, the double coset 
\[
\Gamma_0(N)^{(1)} \left[\begin{array}{cc}
1 & 0 \\
0 & p 
\end{array}\right]\Gamma_0(N)^{(1)}
=\bigsqcup_{s=0}^{p-1} \Gamma_0(N)^{(1)}\left[\begin{array}{cc}
1 & s \\
0 & p 
\end{array}\right]
\]
induces the following operator $U_p=U_{p,1}$ 
acting on $\mathscr{M}_{\kappa}(\Gamma_0(N),\,\chi)^{(1)}$: 
For each $f \in \mathscr{M}_{\kappa}(\Gamma_0(N),\,\chi)^{(1)}
$, put
\[
(f\,\|_{\kappa} \, U_p)(z) := \sum_{s=0}^{p-1} \left(f\,\|_{\kappa}\left[\begin{array}{cc}
1 & s \\
0 & p 
\end{array}\right]\right)(z)
= p^{-1} \sum_{s=0}^{p-1} f\!\left({z+s \over p}\right) \in \mathscr{M}_{\kappa}(\Gamma_0(N),\,\chi)^{(1)}. 
\]
We easily see that it is written in terms of Fourier expansions as 
\begin{equation}
f(z)=\sum_{m=0}^{\infty} a(m) \,q^m \longmapsto (f\,\|_{\kappa} \, U_p)(z)=\sum_{m=0}^{\infty} a(pm) \,q^m 
\end{equation}
and this is still valid even if $p \nmid N$, 
however it maps from $\mathscr{M}_{\kappa}(\Gamma_0(N),\,\chi)^{(1)}$ to $\mathscr{M}_{\kappa}(\Gamma_0(Np),\,\chi)^{(1)}$ in this case. 
We refer to the operator $U_{p}$, regardless of whether $p \mid N$ or not, as Atkin's $U_p$-operator. 
\begin{rem}
Obviously, $U_p$ coincides with the usual Hecke operator $T_p$ if $p \mid N$. However, $U_p$ is slightly different from $T_p$ in general:    
\[
\Gamma_0(N)^{(1)} \left[\begin{array}{cc}
1 & 0 \\
0 & p 
\end{array}\right]\Gamma_0(N)^{(1)}
=\bigsqcup_{s=0}^{p-1} \Gamma_0(N)^{(1)}\left[\begin{array}{cc}
1 & s \\
0 & p 
\end{array}\right] \sqcup \Gamma_0(N)^{(1)}\left[\begin{array}{cc}
p & 0 \\
0 & 1 
\end{array}\right]
\]
if $p \nmid N$. 
\end{rem}
Similarly, if $n>1$ and $p \mid N$, the following $n$ double-coset operators at $p$ are relevant for Siegel modular forms of genus $n$ and level $N$: 
\[
U_{p,i}:=
\left\{\begin{array}{ll}
{\Gamma_0(N)^{(n)}\, 
{\rm diag}(\underbrace{1,\cdots,1}_{i},\underbrace{p,\cdots,p}_{n-i},
\underbrace{p^2,\cdots,p^2}_{i},\underbrace{p,\cdots,p}_{n-i})
\, \Gamma_0(N)^{(n)}} & \textrm{ if }1 \le i \le n-1, \\[8mm] 
{\Gamma_0(N)^{(n)}\, 
{\rm diag}(\underbrace{1,\cdots,1}_{n},\underbrace{p,\cdots,p}_{n})
\, \Gamma_0(N)^{(n)}} & \hspace*{-23mm}\textrm{ if }i=n.  
\end{array}\right. 
\]
We note that if $N$ is divisible by $p$, 
these operators $U_{p,1},\,\cdots,\, U_{p,n-1}$ and $U_{p,n}$ generate the dilating Hecke algebra 
at $p$ acting on $\mathscr{M}_{\kappa}(\Gamma_0(N),\,\chi)^{(n)}$. 
In particular, we are interested in the operator $U_{p,n}$ which plays a central role among them. 
Namely, we define the operator $U_{p,n}$ on $\mathbb{C}_p[[{\bf q}]]^{(n)}$ by 
\begin{equation}
F=
\sum_{T \ge 0} A_{F}(T) \,{\bf q}^T \longmapsto 
F\,\|_{\kappa}\, U_{p,n}=\sum_{T \ge 0} A_{F}(pT) \,{\bf q}^T. 
\end{equation}
Indeed, 
we easily see that if $F \in \mathscr{M}_{\kappa}(\Gamma_0(N),\,\chi)^{(n)}$ 
with some positive integers $\kappa$, $N$ and a Dirichlet character $\chi$, 
then 
\[
F \,\|_{\kappa}\, U_{p,n} \in \left\{
\begin{array}{ll}
\mathscr{M}_{\kappa}(\Gamma_0(N),\,\chi)^{(n)} & \textrm{ if }p \mid N, \\[2mm]
\mathscr{M}_{\kappa}(\Gamma_0(Np),\,\chi)^{(n)} & \textrm{ if }p \nmid N
\end{array}
\right. \vspace*{2mm}
\]
and the action of $U_{p,n}$ commutes with those of the Hecke operators defined outside $Np$. 

\section{Semi-ordinary $p$-stabilization of Siegel Eisenstein series}


Let us fix a positive odd integer $M$ and 
a prime number $p
$ not dividing $M$ once for all. 
In this section, for a pair of positive integers $n$, $\kappa$ and a Dirichlet character $\chi$ modulo $M$ taken as in \S 2, 
we introduce a certain $p$-stabilization of the Siegel Eisenstein series $E_{\kappa,\,\chi}^{(n)}\in \mathscr{M}_{\kappa}(\Gamma_0(M),\,\chi)$
such that it can be viewed as a natural generalization of the ordinary $p$-stabilization 
$E_{\kappa,\,\chi}^{(1)} \mapsto (E_{\kappa,\,\chi}^{(1)})^*$ (cf. Equations (1) and (1')). Moreover, 
we derive an explicit form of the associated Fourier expansion 
\[
(E_{\kappa,\,\chi}^{(n)})^{*}(Z)=\sum_{\scriptstyle 
T \in {\rm Sym}_n^*(\mathbb{Z}), \atop {\scriptstyle 
T \ge 0}} A_{\kappa,\,\chi}^{*}(T)\,{\bf q}^T, 
\]
which is expressed in a similar fashion to (2), and conclude its $p$-adic interpolation problem. \\

To begin with, we introduce 
the following two polynomials in $X$ and $Y$: 
\begin{eqnarray}
\mathcal{P}_{p}^{(n)}(X,\,Y) 
&:=&(1-p^{n}XY)\prod_{i=1}^{[n/2]}(1-p^{2n-2i+1}X^2Y), \nonumber \\
\mathcal{R}_{p}^{(n)}(X,\,Y) 
&:=&\prod_{j=1}^{n} (1- p^{j(2n-j+1)/2}X^j Y). \nonumber 
\end{eqnarray}
In addition, let us denote by $\widetilde{\mathcal{R}}_{p}^{(n)}(X,\,Y)$
the reflected polynomial of $\mathcal{R}_{p}^{(n)}(X,\,Y)$ with respect to $Y$, that is,  
\begin{equation}
\widetilde{\mathcal{R}}_{p}^{(n)}(X,\,Y) := Y^{n}\, \mathcal{R}_{p}^{(n)}(X,\,Y^{-1}) = \prod_{j=1}^{n} (Y- p^{j(2n-j+1)/2}X^j). 
\end{equation}

\begin{rem}
Whenever $n=1$ and $2$, a straightforward calculation yields 
\[\left\{
\begin{array}{l}
\mathcal{P}_p^{(1)}(X,\,Y)=\mathcal{R}_p^{(1)}(X,\,Y)=1-pXY, \\[2mm]
\mathcal{P}_p^{(2)}(X,\,Y)=\mathcal{R}_p^{(2)}(X,\,Y)=(1-p^2XY)(1-p^3X^2Y). 
\end{array}\right.\]
We note that if $n >2$, then $\mathcal{P}_{p}^{(n)}(X,\,Y)\ne\mathcal{R}_{p}^{(n)}(X,\,Y)$,
however, $\mathcal{P}_{p}^{(n)}(X,\,1)$ divides $\mathcal{R}_{p}^{(n)}(X,\,1)$ in general. 
For readers' convenience, we reveal the origins of these two polynomials here:  
Obviously, the former 
$\mathcal{P}_{p}^{(n)}(X,\,Y)$ is relevant to 
the local factor at $p$ of the Fourier coefficient 
$A_{\kappa,\,\chi}(0_n)$ described in Lemma 2.1\,(IIa):
\begin{equation}
\mathcal{P}_{p}^{(n)}(\chi(p)\,p^{\kappa-n-1},\,1)=(1-\chi(p) \,p^{\kappa-1})\prod_{i=1}^{[n/2]}(1-\chi^2(p) \,p^{
2\kappa-2i-1}). 
\end{equation}
The latter $\mathcal{R}_{p}^{(n)}(X,\,Y)$ has been introduced by Kitaoka \cite{Kit86} and B\"ocherer-Sato \cite{B-S87} to describe the denominator of the formal power series 
$
\sum_{m=0}^{\infty} F_p(p^mS;\,X)Y^m 
$ for each nondegenerate $S \in {\rm Sym}_n^{*}(\mathbb{Z}_p)$ (cf. Equation (11) below). 
\end{rem}

\vspace*{2mm}
Now, we introduce a $p$-stabilization of the Siegel Eisenstein series $E_{\kappa,\,\chi}^{(n)}\in \mathscr{M}_{\kappa}(\Gamma_0(M),\,\chi)^{(n)}$ 
in terms of a linear combination of 
$(U_{p,n})^i = \underbrace{U_{p,n} \circ \cdots \circ U_{p,n}}_{i}$ for $i=0,1,\cdots, n$ 
as follows: 

\begin{thm}
For a pair of $n$, $\kappa$ and $\chi$ taken as above, 
put 
\begin{equation}
(E_{\kappa,\,\chi}^{(n)})^*:=
{\mathcal{P}_{p}^{(n)}(\chi(p)\,p^{\kappa-n-1},\,1) \over \mathcal{R}_{p}^{(n)}(\chi(p)\,p^{\kappa-n-1},\,1)}
\cdot E_{\kappa,\,\chi}^{(n)}\,\|_{\kappa}\,\widetilde{\mathcal{R}}_{p}^{(n)}(\chi(p)\,p^{\kappa-n-1},\,U_{p,n}). 
\end{equation}
Then we have 
\smallskip

\begin{enumerate}
\item[{\rm (I)}] $(E_{\kappa,\,\chi}^{(n)})^* \in \mathscr{M}_{\kappa}(\Gamma_0(Mp),\,\chi)^{(n)}$ and it is a 
Hecke eigenform such that 
all the eigenvalues outside $Mp$ agree with those of $E_{\kappa,\,\chi}^{(n)}\in \mathscr{M}_{\kappa}(\Gamma_0(M),\,\chi)^{(n)}$. 
\if0
Namely, we have 
\[
L^{\{Mp\}}(s,\, (E_{\kappa,\,\chi}^{(n)})^*,\,{\rm spin})=L^{\{Mp\}}(s,\,E_{\kappa,\,\chi}^{(n)},\, {\rm spin}),
\]
where $L^{\{Mp\}}(s,\, E_{\kappa,\,\chi}^{(n)},\,{\rm spin})$ 
and 
$L^{\{Mp\}}(s,\, (E_{\kappa,\,\chi}^{(n)})^*,\,{\rm spin})$
both denote the associated spinor $L$-functions 
with the Euler factors corresponding to primes dividing $Mp$ removed. 
 \\[1mm]
\fi

\medskip

\item[{\rm (II)}]If $0 \le T \in {\rm Sym}_{n}^{*}(\mathbb{Z})$ is 
taken of the form 
\[
T=
\left[\begin{array}{c|c}
T' & {} \\
\hline
{} & 0_{n-r}
\end{array}
\right]
\] 
for some nondegenerate $T' \in {\rm Sym}_r^{*}(\mathbb{Z})$ with $0 \le r \le n$,  
then the $T$-th Fourier coefficient of $(E_{\kappa,\,\chi}^{(n)})^*$ 
is taken of the following form: \vspace*{-2mm}
\begin{center}
\begin{minipage}[t]{0.9\textwidth}  
\begin{eqnarray*}
A_{\kappa,\,\chi}^{*}(T)
&=& 2^{[(r+1)/2]-[(n+1)/2]} \prod_{i=[r/2]+1}^{[n/2]} L^{\{p\}}(1-2\kappa+2i,\chi^2) \nonumber \\ 
&& \times
\left\{
\begin{array}{ll}
L^{\{p\}}(1-\kappa+r/2, \, \left({\mathfrak{d}_{T'} \over *}\right)\chi) 
\displaystyle\prod_{\scriptstyle \,\,l\, \mid\, \mathfrak{f}_{T'}, \atop {\scriptstyle l \ne p}} 
F_l(T';\, 
\chi(l)\,l^{\kappa-r-1}) & \textrm{ if $r$ is even}, \\[5mm] 
\displaystyle\prod_{\scriptstyle \,\,l\, \mid\, \mathfrak{D}_{T'}, \atop {\scriptstyle l \ne p}} 
F_l(T';\, 
\chi(l)\,l^{\kappa-r-1}) & \textrm{ if $r$ is odd}. 
\end{array}\right. \nonumber
\end{eqnarray*}
\end{minipage}
\end{center}
\medskip

\noindent
Therefore we have  
$
(E_{\kappa,\,\chi}^{(n)})^*\,\|_{\kappa}\,U_{p,n} = (E_{\kappa,\,\chi}^{(n)})^*. 
$
\end{enumerate}
\end{thm}

The preceding Theorem 4.2 totally 
insists that the assignment $E_{\kappa,\,\chi}^{(n)} \mapsto (E_{\kappa,\,\chi}^{(n)})^{*}$ can be regarded as a $p$-stabilization 
which generalizes the ordinary $p$-stabilization of $E_{\kappa,\,\chi}=E_{\kappa,\,\chi}^{(1)}$ explained in \S 1. 
Whenever $n=2$, Skinner-Urban \cite{S-U06} has already dealt with a similar type of $p$-stabilization 
for some Siegel modular forms so that the associated eigenvalue of $U_{p,2}$ is a $p$-adic unit. 
Accordingly, in the same context, we 
call $(E_{\kappa,\,\chi}^{(n)})^{*}$ 
{\it semi-ordinary} at $p$ if $n \ge 2$. 
\begin{rem}
It should be mentioned that if $n >1$, $(E_{\kappa,\,\chi}^{(n)})^*$ may {\it not}\, satisfy the ordinary condition at $p$ in the sense of Hida (cf. \cite{Hid02,Hid04}). 
This disparity is inevitable at least for Siegel Eisenstein series in general. 
For instance, in the case where $M=1$, because of the shape of Satake parameters at $p$, 
there is no way to produce from $E_{\kappa}^{(n)}=E_{\kappa,\,{\rm triv}}^{(n)} \in \mathscr{M}_{\kappa}(\Gamma_0(1))^{(n)}$ to 
a Hecke eigenform of level $p$ such that the associated eigenvalues of $U_{p,1},\cdots,\,U_{p,n-1}$ and $U_{p,n}$ are $p$-adic units simultaneously. 
However, 
as mentioned in \cite{S-U06}, it turns out that the semi-ordinary condition concerning only on the eigenvalue of $U_{p,n}$ 
is sufficient to adapt Hida's ordinary theory with some modification. (See also \cite{Pi11, B-P-S16}.)
\if0
\fi
\end{rem}

\vspace*{2mm}
\begin{proof}[Proof of Theorem 4.2]
The assertion (I) is obvious from Lemma 2.1\,(I) and the properties of $U_{p,n}$ explained in \S 3. 
\if0 
The proof of the assertion (II) proceeds in a similar way as the one of Theorem 5.1 (and also Theorem 4.1) in \cite{Kaw10} obtained in the case where $n$ is even. Here is a complete proof. 
\fi 
Since $U_{p,n}$ does not effect on the Fourier coefficient $A_{\kappa,\,\chi}(0_n)$,  
the assertion (II) for $r=0$ follows immediately from Lemma 2.1\,(IIa), Equations (8) and (9). 
\if0
\[
\mathcal{P}_{p}^{(n)}(\chi(p)\,p^{\kappa-n-1},\,1)=(1-\chi(p)\,p^{\kappa-1}) \prod_{i=1}^{[n/2]} (1-\chi^2(p)\,p^{2\kappa-2i-1}). 
\]
\fi
Hereinafter, we suppose that $r >0$. It follows by the definition of $\widetilde{\mathcal{R}}_{p}^{(n)}(X,Y)$ (cf. Equation (7)) that
\begin{eqnarray*}
\widetilde{\mathcal{R}}_{p}^{(n)}(X,Y)
&=& 
\sum_{m=0}^{n} (-1)^m 
s_m\!\left(\{ p^{j(2n-j+1)/2} X^j \,|\, 1 \le j \le n\}\right) Y^{n-m}, 
\end{eqnarray*} 
where 
$s_m(\{X_1,\cdots,X_n\})$ denotes  
the $m$-th elementary symmetric polynomial in $X_1,\,\cdots,\,X_n$. 
Thus, by Lemma 2.1\,(IIb) and Equation (9), we have 
\begin{eqnarray*}
\lefteqn{
A_{\kappa,\,\chi}^{*}(T)
={\mathcal{P}_{p}^{(n)}(\chi(p)\,p^{\kappa-n-1},\,1) \over 
\mathcal{R}_{p}^{(n)}(\chi(p)\,p^{\kappa-n-1},\,1)} \cdot \,
2^{[(r+1)/2]-[(n+1)/2]}
\prod_{i=[r/2]+1}^{[n/2]} L(1-2\kappa+2i,\,\chi^2) }
 \\
&\times& \sum_{m=0}^{n} (-1)^m 
s_m\!\left(\{\chi^j(p)\,p^{j(2n-j+1)/2+j(\kappa-n-1)} \,|\, 1 \le j \le n\}\right) 
F_p(p^{n-m}T';\, \chi(p)\,p^{\kappa-r-1}) \\
&& \times 
\left\{
\begin{array}{ll}
L(1-\kappa+r/2, \, \left({\mathfrak{d}_{T'} \over *}\right)\chi) 
\displaystyle\prod_{\scriptstyle \,l\, \mid \, \mathfrak{f}_{T'}, \atop {\scriptstyle l \ne p}} 
F_l(T';\,\chi(l)\,l^{\kappa-r-1}) & \textrm{ if $r$ is even}, \\[5mm]
\displaystyle\prod_{\scriptstyle \,l\, \mid \, \mathfrak{D}_{T'}, \atop {\scriptstyle l \ne p}} 
F_l(T';\,\chi(l)\,l^{\kappa-r-1}) & \textrm{ if $r$ is odd}. 
\end{array}
\right. 
\end{eqnarray*}
Here we note that 
\[\left\{
\begin{array}{l}
\mathcal{R}_{p}^{(n)}(X,\,Y)= \mathcal{R}_{p}^{(r)}(p^{n-r}X,\,Y) 
\displaystyle\prod_{j=r+1}^{n} (1- p^{j(2n-j+1)/2}X^j Y), \\[5mm]
\mathcal{P}_{p}^{(n)}(X,\,1)= \mathcal{P}_{p}^{(r)}(p^{n-r}X,\,1) 
\displaystyle\prod_{i=[r/2]+1}^{[n/2]}(1-p^{2n-2i+1}X^2).
\end{array}\right.\]
Thus, to prove the assertion (II), it suffices to show that 
the following equation holds valid for each 
nondegenerate $T' \in {\rm Sym}_{r}^{*}(\mathbb{Z}_p)$ with
$0 < r \le n$: 
\begin{eqnarray}
\lefteqn{\sum_{m=0}^{r} (-1)^m  
s_m\!\left(
\{p^{j(2r-j+1)/2} X^j \,|\, 1 \le j \le r\}
\right) F_p(p^{r-m}T';\,X)} \\
&=& 
\displaystyle{\mathcal{R}_{p}^{(r)}(X,\,1) \over \mathcal{P}_{p}^{(r)}(X,\,1)} \cdot 
\left\{
\begin{array}{ll}
\left(1 - 
\xi_p((-1)^{r/2}\det T'
)\, p^{r/2} X\right) 
 & \textrm{ if $r$ is even}, \\[5mm]
1 & \textrm{ if $r$ is odd}. 
\end{array}
\right. 
 \nonumber
\end{eqnarray}
(Indeed, the preceding equation (10) yields 
\if0
the desired assertion follows from the equation (12) immediately since 
its right-hand side is invariant under $T'\mapsto pT'$ regardless of the parity of $r$. 
\fi
\begin{eqnarray*}
\lefteqn{
{\mathcal{P}_{p}^{(n)}(X,\,1) \over 
\mathcal{R}_{p}^{(n)}(X,\,1)}
 \sum_{m=0}^{n} (-1)^m 
s_m\!\left(\{p^{j(2n-j+1)/2}X^j \,|\, 1 \le j \le n\}\right) 
F_p(p^{n-m}T';\, p^{n-r}X)
} \\
&=& 
\if0
{\mathcal{P}_{p}^{(n)}(X,\,1) \over 
\mathcal{P}_{p}^{(r)}(p^{n-r}X,\,1)} 
\fi 
\displaystyle\prod_{i=[r/2]+1}^{[n/2]}(1-p^{2n-2i+1}X^2) \times 
\left\{
\begin{array}{ll}
\left(1 - 
\xi_p((-1)^{r/2}\det T'
)\, p^{n-r/2} X\right) & \textrm{ if $r$ is even}, \\[5mm]
1 & \textrm{ if $r$ is odd}, 
\end{array}
\right. 
\end{eqnarray*}
and hence, by evaluating this at $X=p^{\kappa-n-1}$, we obtain the desired equation. )
On the other hand,  
Theorem 1 in \cite{Kit86} (resp. Theorem 6 in \cite{B-S87}) states that 
for each nondegenerate $T' \in {\rm Sym}_{r}^{*}(\mathbb{Z}_p)$, 
the equation 
\begin{equation}
\sum_{m=0}^{\infty} F_p(p^m T';\,X)Y^m 
={\mathcal{S}_p
(T';\,X,\,Y)\over (1-Y)\mathcal{R}_{p}^{(r)}(X,\,Y)}
\end{equation}
holds for some polynomial $\mathcal{S}_p(T';\,X,\,Y) \in \mathbb{Z}[X,\,Y]$ if $p\ne 2$ (resp. $p=2$).  
Since the preceding equation yields 
\[
\sum_{m=0}^{r} (-1)^m  
s_m\!\left(
\{p^{j(2r-j+1)/2} X^j \,|\, 1 \le j \le r\}
\right) F_p(p^{r-m}T';\,X)=\mathcal{S}_p(T';\,X,\,1), 
\]
we may interpret Equation (10) as  
\begin{equation}
\mathcal{S}_p
(T';\,X,\,1)=
\displaystyle{\mathcal{R}_p^{(r)}(X,\,1) \over \mathcal{P}_p^{(r)}(X,\,1)} \times 
\left\{
\begin{array}{ll}
\left(1 - 
\xi_p((-1)^{r/2}\det T'
)\, p^{r/2} X\right) 
 & \textrm{ if $r$ is even}, \\[5mm]
1 & \textrm{ if $r$ is odd}. 
\end{array}
\right. 
\end{equation}
Whenever $r=1$, we easily see that  
$F_p(t;\,X)=\sum_{i=0}^{{\rm val}_p(t)} (pX)^i$ for each $t \in \mathbb{Z}_p \smallsetminus \{0\}$. Thus, we have 
\[
F_p(pt;\,X)-pX F_p(t;\,X)=1={\mathcal{R}_p^{(1)}(X,1) \over \mathcal{P}_p^{(1)}(X,1)} \quad {\rm (cf. \ Remark\ 4.1)}. 
\]
Whenever $r >1$, 
for a given $T$, let $\mathfrak{i}(T)$ denote the least integer $m$ such that $p^m T^{-1} \in {\rm Sym}_{r}^{*}(\mathbb{Z}_p)$. 
It is known that if $r=2$, then for each nondegenerate $T \in {\rm Sym}_2^{*}(\mathbb{Z}_p)$, 
the polynomial $F_p(T;\,X)$ admits the explicit form 
\begin{eqnarray*}
F_p(T;\,X) = \sum_{i=0}^{\mathfrak{i}(T)} (p^2 X)^i \left\{ 
\sum_{j=0}^{{\rm val}_p(\mathfrak{f}_T) - i} (p^3 X^2)^j
-\xi_p(
-\det T)\,pX 
\sum_{j=0}^{{\rm val}_p(\mathfrak{f}_T) - i-1} (p^3 X^2)^j \right\} 
\end{eqnarray*}
(cf. \cite{
Kat99}). Thus 
a simple calculation yields that 
\begin{eqnarray*}
\lefteqn{
F_p(p^2 T;\, X)
-(p^2 X+p^3 X^2) F_p(p T;\,X) 
+p^5 X^3 F_p(T;\,X)
} \\[1mm]
&=& 1 - \xi_p(-\det T) p X
= {\mathcal{R}_p^{(2)}(X,1)
\over 
\mathcal{P}_p^{(2)}(X,1)
} \times
\left(1 - \xi_p(-\det T) p X\right),
\end{eqnarray*}
\if0
\begin{eqnarray*}
\lefteqn{
F_p(p^2 T;\, X)
-(p^2 X+p^3 X^2) F_p(p T;\,X) 
+p^5 X^3 F_p(T;\,X) } \\[2mm]
&=& \sum_{i=0}^{\mathfrak{i}(T)+2} (p^2 X)^i 
\left\{ \sum_{j=0}^{v_p(\mathfrak{f}_T) - i+2} (p^3 X^2)^j -\chi_p(\mathfrak{d}_T
)\,pX 
\sum_{j=0}^{v_p(\mathfrak{f}_T) - i+1} (p^3 X^2)^j \right\} \\
&& \hspace*{5mm} -\sum_{i=0}^{\mathfrak{i}(T)+1} (p^2 X)^{i+1} 
\left\{ \sum_{j=0}^{v_p(\mathfrak{f}_T) - i+1} (p^3 X^2)^j
-\chi_p(\mathfrak{d}_T)\,pX 
\sum_{j=0}^{v_p(\mathfrak{f}_T) - i} (p^3 X^2)^j \right\} \\
&& - \sum_{i=0}^{\mathfrak{i}(T)+1} (p^2 X)^i 
\left\{ \sum_{j=0}^{v_p(\mathfrak{f}_T) - i+1} (p^3 X^2)^{j+1}
-\chi_p(\mathfrak{d}_T)\,pX 
\sum_{j=0}^{v_p(\mathfrak{f}_T) - i} (p^3 X^2)^{j+1} \right\} \\
&& \hspace*{5mm} + \sum_{i=0}^{\mathfrak{i}(T)} (p^2 X)^{i+1} 
\left\{ \sum_{j=0}^{v_p(\mathfrak{f}_T) - i} (p^3 X^2)^{j+1}
-\chi_p(\mathfrak{d}_T)\,pX 
\sum_{j=0}^{v_p(\mathfrak{f}_T) - i-1} (p^3 X^2)^{j+1} \right\} \\
&=& \sum_{i=0}^{\mathfrak{i}(T)+2} (p^2 X)^i 
\left\{ \sum_{j=0}^{v_p(\mathfrak{f}_T) - i+2} (p^3 X^2)^j
-\chi_p(\mathfrak{d}_T)\,pX 
\sum_{j=0}^{v_p(\mathfrak{f}_T) - i+1} (p^3 X^2)^j \right\} \\
&& \hspace*{5mm} -\sum_{i=1}^{\mathfrak{i}(T)+2} (p^2 X)^{i} 
\left\{ \sum_{j=0}^{v_p(\mathfrak{f}_T) - i+2} (p^3 X^2)^j
-\chi_p(\mathfrak{d}_T)\,pX 
\sum_{j=0}^{v_p(\mathfrak{f}_T) - i+1} (p^3 X^2)^j \right\} \\
&& - \sum_{i=0}^{\mathfrak{i}(T)+1} (p^2 X)^i 
\left\{ \sum_{j=0}^{v_p(\mathfrak{f}_T) - i+1} (p^3 X^2)^{j+1}
-\chi_p(\mathfrak{d}_T)\,pX 
\sum_{j=0}^{v_p(\mathfrak{f}_T) - i} (p^3 X^2)^{j+1} \right\} \\
&& \hspace*{5mm} + \sum_{i=1}^{\mathfrak{i}(T)+1} (p^2 X)^i 
\left\{ \sum_{j=0}^{v_p(\mathfrak{f}_T) - i+1} (p^3 X^2)^{j+1}
-\chi_p(\mathfrak{d}_T)\,pX 
\sum_{j=0}^{v_p(\mathfrak{f}_T) - i} (p^3 X^2)^{j+1} \right\} \\ %
\if0
\end{eqnarray*}
\begin{eqnarray*}
\lefteqn{\phantom{F_p(p^2 T;\, X)-(p^2 X+p^3 X^2) F_p(p T;\,X)+p^5 X^3 F_p(T;\,X)} }
\fi
&=& 
\left\{ \sum_{j=0}^{v_p(\mathfrak{f}_T)+2} (p^3 X^2)^j
-\chi_p(\mathfrak{d}_T)\,pX 
\sum_{j=0}^{v_p(\mathfrak{f}_T)+1} (p^3 X^2)^j \right\} \\
&& \hspace*{5mm} - \left\{ \sum_{j=0}^{v_p(\mathfrak{f}_T)+1} (p^3 X^2)^{j+1}
-\chi_p(\mathfrak{d}_T)\,pX 
\sum_{j=0}^{v_p(\mathfrak{f}_T)} (p^3 X^2)^{j+1} \right\} \\
&=& 
\left\{ \sum_{j=0}^{v_p(\mathfrak{f}_T)+2} (p^3 X^2)^j
-\chi_p(\mathfrak{d}_T)\,pX 
\sum_{j=0}^{v_p(\mathfrak{f}_T)+1} (p^3 X^2)^j \right\} \\
&& \hspace*{5mm} - \left\{ \sum_{j=1}^{v_p(\mathfrak{f}_T)+2} (p^3 X^2)^j
-\xi_p(\mathfrak{d}_T)\,pX 
\sum_{j=1}^{v_p(\mathfrak{f}_T)+1} (p^3 X^2)^j \right\} \\[3mm]
&=& 1 - \chi_p(\mathfrak{d}_T) p X.  
\end{eqnarray*}
\fi 
and hence, Equation (12) also holds for $r=2$. 
Now, we suppose that $r>2$. 
We note that every nondegenerate $T \in {\rm Sym}_{r}^{*}(\mathbb{Z}_p)$ is equivalent, over $\mathbb{Z}_p$, to a canonical form
\begin{center}
$
T=\left[
\begin{array}{c|c}
T_1 & {}  \\
\hline
{} & T_2
\end{array}
\right]$ 
\end{center}
for some $
T_1 \in {\rm Sym}_{2}^{*}(\mathbb{Z}_p)$ and $T_2 \in {\rm Sym}_{r-2}^{*}(\mathbb{Z}_p)\cap{\rm GL}(r-2,\mathbb{Q}_p)$. 
It follows from Theorems 4.1 and 4.2 in \cite{Kat99} that 
\begin{eqnarray*}
{\mathcal{S}_p(T;\,X,\,1) \over 1-\xi_p(
(-1)^{r/2}\det T
)p^{r/2} X}&=&
{\mathcal{S}_p
(T_2;\,p^2X,\,1) \over 1- \xi_p(
(-1)^{r/2-1}\det T_2
)p^{r/2+1}X} \\
&& \times {(1-p^{(r-1)(r+2)/2}X^{r-1})(1 - p^{r(r+1)/2}X^r) 
\over 1- p^{r+1}X^2}
\end{eqnarray*}
if $r$ is even, and 
\[
\mathcal{S}_p(T;\,X,\,1)=\mathcal{S}_p(T_2;\,p^2X,\,1)\cdot
{
(1-p^{(r-1)(r+2)/2}X^{r-1})(1 - p^{r(r+1)/2}X^r)
\over 1- p^{r+2
}X^2
}
\]
if $r$ is odd.  
(See also \cite[\S 3]{Kat01}) 
Thus, it is proved by induction on $r$ that Equation (12) (and thus, (10)\,) holds in general. 
This completes the proof. 
\if0
\begin{eqnarray*}
\lefteqn{
\mathcal{S}_p
(T;\,X,\,1)} \\
&=&(1-\xi_p(
(-1)^{n/2}\det T)p^{n/2} X) \\
&& \times{\mathcal{R}_{p,n-2}(X,\,1)(1-p^{(n-1)(n+2)/2}X^{n-1})(1 - p^{n(n+1)/2}X^{n}) 
\over \mathcal{P}_{p,n-2}(X,\,1)(1- p^{n+1}X^2)} \\
&=&(1-\xi_p(
(-1)^{n/2}\det T)p^{n/2} X) \cdot
{\mathcal{R}_{p}^{(n)}(X,\,1) \over \mathcal{P}_{p}^{(n)}(X,\,1)}. 
\end{eqnarray*}
We complete the proof. 
\fi 
\end{proof}
\if0
B\"ocherer-Sato \cite{B-S87} showed that the equation (13) remains valid even for $p=2$. Thus, we may prove the preceding theorem for $p=2$ in the same way. In that case, the only thing to be attended to is that we could not always exploit Theorem 4.1 in \cite{Kat99} to prove the equation (14) but Theorem 4.2 in [ibid]. 
\end{rem}
\fi 

\if0
\vspace*{2mm}
\begin{rem}
Theorem 3.2 (i) 
insists that the assignment $E_{\kappa}^{(n)} \mapsto (E_{\kappa}^{(n)})^{*}$ can be regarded as a $p$-stabilization procedure generalizing the ordinary $p$-stabilization of $E_{\kappa}^{(1)}$ \`a la Serre \cite{Ser73} (cf. \S 1). 
When $g=2$, Skinner-Urban \cite{S-U06} has already dealt with a similar type of $p$-stabilization so that the resulting eigenvalue of $U_{p,2}$ is a $p$-adic unit. Accordingly, we 
call $(E_{\kappa}^{(n)})^{*}$ 
{\it semi-ordinary} at $p$, 
even in the case where $g>2$ (cf. \cite{Kaw10}). 

As mentioned by \cite{S-U06}, it turns out that this type of $p$-stabilization, which is so-called {\it semi-ordinary} $p$-stabilization, concerning only on the $U_{2n,p}$-eigenvalue is sufficient to adapt the ordinary theory. Indeed, the operator $U_{2n,p}$ is given by the trace of the geometric Frobenius element ${\rm Frob}_p$ acting on the ordinary 
cohomology of the Siegel modular variety. 

However, it should be emphasized that $(E_{\kappa}^{(n)})^*$ does not admit the ordinary condition at $p$ in the sense of Hida (cf. \cite{Hid02,Hid04}). 
Indeed, as is evident from equations (7) and (8), there is no way to produce from $E_{\kappa}^{(n)}$ to a simultaneous eigenfunction of $U_{p,1},\cdots,\,U_{p,g-1}$ and $U_{p,n}$ such that all the corresponding eigenvalues are $p$-adic units. 
It is because the fact that the spherical representation of 
${\rm GSp}_{2n}(\mathbb{Q}_p)$ arising from $E_{\kappa}^{(n)}$ is a degenerate principal series. 
\end{rem}
\fi

\if0
\begin{rem}
Let $\mathcal{Q}_p(E_{\kappa}^{(n)};\,Y)\in \mathbb{C}[Y]$ be the Hecke polynomial of $E_{\kappa}^{(n)}$ at $p$ (cf. \S2). 
Put 
$\mathcal{Q}_p^{*}(E_{\kappa}^{(1)};\,Y):={\mathcal{Q}_p(E_{\kappa}^{(1)};\,Y)/(1-Y)}=1-p^{\kappa-1}Y$, 
and  
\begin{eqnarray*}
\mathcal{Q}_p^{*}(E_{\kappa}^{(n)};\,Y)&:=& 
{\mathcal{Q}_p(E_{\kappa}^{(n)};\,Y) \over 
(1-\psi_{p,0}(E_{\kappa}^{(n)})\,Y)(1-\psi_{p,0}(E_{\kappa}^{(n)})\prod_{i=1}^{[n/2]}\psi_{p,i}(E_{\kappa}^{(n)})\,Y)} \\
&=&
{\mathcal{Q}_p(E_{\kappa}^{(n)};\,Y) \over 
(1-p^{[n/2]\cdot(\kappa-[n/2])-[n/2]\cdot([n/2]+1)/2}\,Y)(1-Y)} 
\end{eqnarray*}
if $n>1$. We easily see that 
$\mathcal{R}_{p}^{(n)}(p^{\kappa-n-1},\,Y)$ 
divides $\mathcal{Q}_p^*(E_{\kappa}^{(n)};\,Y)$. 
Thus, we may deduce from Theorem 3.2 (ii) that 
\begin{equation}
(E_{\kappa}^{(n)})^*={
\mathcal{P}_{p}^{(n)}(p^{{\kappa}-n-1},\,1) \over \mathcal{Q}_p^{*}(E_{\kappa}^{(n)};\,1)
}\cdot E_{\kappa}^{(n)}|_{\kappa}\,\widetilde{\mathcal{Q}}_p^{*}(E_{\kappa}^{(n)};\,U_{p,n}),  
\end{equation}
where 
$\widetilde{\mathcal{Q}}_p^*(E_{\kappa}^{(n)};\,Y)
:=Y^{2^n - 2+\delta_{n,1}} \, \mathcal{Q}_p^{*}(E_{\kappa}^{(n)};\,Y^{-1})
$. 
We note that the right-hand side of the preceding equation looks quite similar to the $p$-stabilization 
of $E_{\kappa}^{(n)}$ 
in the sense of Courtieu-Panchishkin \cite{C-P04}, which also admits the semi-ordinary condition at $p$ (cf. Remark 2.5 in [ibid]).  
Indeed, these two semi-ordinary $p$-stabilizations are eventually the same up to constant multiples, however, Theorem 3.2 (ii) turned out that the action of 
\[
{\mathcal{P}_{p}^{(n)}(p^{{\kappa}-n-1},\,1) \over \mathcal{R}_{p}^{(n)}(p^{{\kappa}-n-1},\,1)}\cdot
\widetilde{\mathcal{R}}_{p}^{(n)}(p^{{\kappa}-n-1},\,U_{p,n})
\]
can be thought of 
the heart of all possible semi-ordinary $p$-stabilization procedures for $E_{\kappa}^{(n)}$, in which the constant 
$\mathcal{P}_{p}^{(n)}(p^{{\kappa}-n-1},\,1)/\mathcal{R}_{p}^{(n)}(p^{{\kappa}-n-1},\,1)$ contributes 
non-trivially 
to the elimination of local factors of $A_T(E_{\kappa}^{(n)})$'s at $p$ except for $n=1$ and $2$ (cf. Example 3.1). 
\end{rem}
\fi

\if0 
Indeed, we have 
\begin{eqnarray*}
\lefteqn{
{\widetilde{\mathcal{P}}_{p,2n}(\alpha_p(f)^n) \over \widetilde{\mathcal{Q}}_{p,2n}^{*}(\alpha_p(f)^n)}\cdot {\rm Lift}^{(2n)}(f)|_{k+n}\,\widetilde{\mathcal{Q}}_{p,2n}^{*}(U_{p,2n})
} \\
&=& 
{\widetilde{\mathcal{R}}_{p,2n}(\alpha_p(f)^n) \over \widetilde{\mathcal{Q}}_{p,2n}^{*}(\alpha_p(f)^n)} \cdot 
{\rm Lift}^{(2n)}(f)^{*}
|_{k+n}\,{\widetilde{\mathcal{Q}}_{p,2n}^{*}(U_{2n,p}) \over \widetilde{\mathcal{R}}_{p,2n}(U_{p,2n})} \\
&=& 
{\rm Lift}^{(2n)}(f)^{*}. 
\end{eqnarray*}
\begin{eqnarray*}
{\rm Lift}^{(2n)}(f)^{\dag}&:=& {\rm Lift}^{(2n)}(f)|_{k+n} (U_{p,2n}-\psi_0(p))\cdot\widetilde{\mathcal{Q}}_p^{*}(U_{p,2n}) \\
&=& {\rm Lift}^{(2n)}(f)|_{k+n} \widetilde{\mathcal{Q}}_p(U_{p,2n})\cdot(U_{p,2n}-\psi_0(p)\prod_{i=1}^n \psi_i(p))^{-1}, 
\end{eqnarray*}
which possesses the same eigenvalue of $U_{2n,p}$ as ${\rm Lift}^{(2n)}(f)^{*}$. 
Indeed, it is known that $U_{2n,p}$ annihilates the polynomial $\widetilde{\mathcal{Q}}_p(Y)$ (cf. Proposition 6.10 in \cite{A-Z95}).  
Thus, combing this fact with the equation (13) turns out that 
\begin{eqnarray*}
{\rm Lift}^{(2n)}(f)^{\dag}|_{k+n}U_{p,2n}
 &=& \psi_0(p) \prod_{i=1}^{n} \psi_i(p)
\cdot {\rm Lift}^{(2n)}(f)^{\dag} \\
&=&\alpha_p(f)^n 
\cdot {\rm Lift}^{(2n)}(f)^{\dag}.  
\end{eqnarray*}
It is easy to verify that $U_{p,2n}$ annihilates $\widetilde{\mathcal{Q}}_p^*(Y)\cdot (Y-\psi_0(p)\prod_{i=1}^n \psi_{i}(p)) 
$ as well, and hence we may also prove 
that for ${\rm Lift}^{(2n)}(f)^{*}$, the corresponding eigenvalue of $U_{2n,p}$ is $\alpha_p(f)^n$ 
along the same line as above. 
In any case, we have  
\[
{\rm Lift}^{(2n)}(f)^{\dag} = {(\alpha_p(f)^n - p^{nk-n(n+1)/2})\widetilde{\mathcal{Q}}_{p,2n}^{*}(\alpha_p(f)^n) \over \widetilde{\mathcal{P}}_{p,2n}(\alpha_p(f)^n)}\cdot
{\rm Lift}^{(2n)}(f)^{*}. 
\]
Thus ${\rm Lift}^{(2n)}(f)^{*}$ and ${\rm Lift}^{(2n)}(f)^{\dag}$ are essentially the same, however, the former can be regarded as the 
principal $p$-stabilization for ${\rm Lift}^{(2n)}(f)$ in the sense of Theorem 4.1, and also for the Siegel Eisenstein series $E_{k+n}^{(2n)}$ (cf. \S5.1 below).  
\fi 

\if0 
\section{Another aspect of the $p$-stabilized Siegel Eisenstein series}
For each integer $0 \le r \le g$, put 
\begin{eqnarray*}
G_{\kappa, r}^{(n)}(Z)&:=&2^{-[(g+1)/2]} \zeta(1-\kappa) \prod_{i=1}^{[g/2]} 
\zeta^{\{p\}}(1-2\kappa+2i) \\
&& \times \sum_{\scriptstyle 
M=\left(\begin{smallmatrix}
* & * \\
C & D
\end{smallmatrix}\right) \in {\rm P}_{2n} \cap {\rm Sp}_{2n}(\mathbb{Z}) \backslash {\rm Sp}_{2n}(\mathbb{Z}), \atop 
{\scriptstyle {\rm rank}_{\mathbb{Z}/p\mathbb{Z}}(C\, {\rm mod }\, p)=r}
} \det(CZ+D)^{-\kappa},
\end{eqnarray*}
and 
Then we define a linear combination of $G_{\kappa, r}^{(n)}$'s as follows: 
\begin{equation}
G_{\kappa}^{(n)}(Z):=\sum_{r=0}^{g} c_{\kappa, r}^{(n)}(p) G_{\kappa, r}^{(n)}(Z), 
\end{equation}
where 
\[
c_{\kappa, r}^{(n)}(p):=
\left\{\begin{array}{ll}
p^{-r/2} \displaystyle\prod_{i=0}^{r/2} (1-p^{-2i-1})(1-p^{2\kappa-r+2i-3}) & \textrm{ if $r$ is even}, \\[5mm]
0 & \textrm{ if $r$ is odd}. 
\end{array}
\right.
\]

\begin{prop}
We have 
\begin{equation}
E_{\kappa}^{(n),*}=G_{\kappa}^{(n)}. 
\end{equation}
\end{prop}
\fi 

\vspace*{1mm}
As a straightforward conclusion of Theorem 4.2 above, we have  
\vspace*{1mm}
\begin{thm} 
Let $\chi$ be a Dirichlet character modulo 
$M$ as above 
and $\omega^a$ a power of the Teichm\"uller character with $0 \le a <\varphi({\bf p})$
\footnote{Here $\varphi$ denotes Euler's totient function.}, respectively. 
For each $n \ge 1$,  
there exists a formal Fourier expansion  
\[\mathcal{E}_{\chi\omega^a}^{(n)}
(X)=\sum_{\scriptstyle T \in {\rm Sym}_n^*(\mathbb{Z}), \atop {\scriptstyle T \ge 0}} 
\mathcal{A}_{\chi\omega^a}(T;\,X)
\, {\bf q}^T 
\in \mathcal{L}[[{\bf q}]]^{(n)}, 
\]
where $\mathcal{L}$ is the field of fractions of $\Lambda=\mathbb{Z}_p[\chi][[X]]$, 
such that for each positive integer 
$\kappa > n+1$ with $\chi(-1)=(-1)^{\kappa}$ and $\kappa \equiv a \pmod{\varphi({\bf p})}$, we have 
\[
\mathcal{E}_{\chi\omega^a}^{(n)}
((1+{\bf p})^{\kappa} -1)=
(E_{\kappa,\,\chi}^{(n)})^*
\in \mathscr{M}_{\kappa}(\Gamma_0(Mp),\,\chi)^{(n)}. 
\]   
Moreover, put 
\begin{eqnarray*}
\mathcal{B}^{(n)}(X)&:=&
\prod_{i=1}^{[n/2]} \{(1+{\bf p})^{-2i} (1+X)^2 -1\}
\prod_{j=0}^{[n/2]} \{(1+{\bf p})^{-j} (1+X) -1\} \\
&=& 
X \prod_{i=1}^{[n/2]} \{(1+{\bf p})^{-i} (1+X) -1\}^2
\cdot \{(1+{\bf p})^{-i} (1+X) +1\}.
\end{eqnarray*}
Then \,$\overline{\mathcal{E}}_{\chi\omega^a}^{(n)}
(X):=\mathcal{B}^{(n)}(X) \cdot 
\mathcal{E}_{\chi\omega^a}^{(n)}
(X)$ belongs to $\Lambda[[{\bf q}]]^{(n)}$. 
\end{thm}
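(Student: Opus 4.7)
The plan is to construct $\mathcal{E}^{(n)}(\omega^{a})(X)$ coefficient-by-coefficient using the explicit formula of Theorem 3.2 (ii), interpolate each factor $p$-adically via Kubota--Leopoldt, and then verify that $\mathcal{B}^{(n)}(X)$ clears the resulting denominators. Fixing $T \in {\rm Sym}_n^{*}(\mathbb{Z})$ of rank $r$ with nondegenerate part $T'$, Theorem 3.2 (ii) decomposes $A_T((E_{\kappa}^{(n)})^*)$ into: a combinatorial constant; zeta factors $\zeta^{\{p\}}(1-2\kappa+2i)$ for $[r/2]+1 \le i \le [n/2]$; if $r$ is even, an $L$-factor $L^{\{p\}}(1-\kappa+r/2, (\mathfrak{d}_{T'}/*))$; and local polynomials $F_l(T';\, l^{\kappa-r-1})$ at primes $l \ne p$. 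For the last kind, since $l \in \mathbb{Z}_p^{\times}$, writing $l^{\kappa} = \omega(l)^{a}(1+X)^{t_l}$ with $t_l = \log_{1+\mathbf{p}}\langle l \rangle \in \mathbb{Z}_p$ yields interpolation by the binomial series, producing an element of $\Lambda$.

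Next I would apply Kubota--Leopoldt: for each finite-order $p$-adic character $\chi$ of $\mathbb{Z}_p^{\times}$ there exists $\mathcal{G}_\chi(X) \in \mathcal{L}$ such that $\mathcal{G}_\chi((1+\mathbf{p})^{\kappa}-1)$ interpolates $L^{\{p\}}(1-\kappa, \chi\omega^{-\kappa})$, with $\mathcal{G}_\chi \in \Lambda$ unless $\chi\omega^{-\kappa}$ becomes trivial, in which case $\mathcal{G}_\chi$ has only a simple pole. Composing with the substitutions $X \mapsto (1+\mathbf{p})^{-2i}(1+X)^{2}-1$ and $X \mapsto (1+\mathbf{p})^{-r/2}(1+X)-1$ yields elements of $\mathcal{L}$ interpolating the zeta factors and the $L$-factor, respectively. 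Multiplying these together with the $F_l$-contributions and the constant defines $\mathcal{A}_T(\omega^{a}; X) \in \mathcal{L}$ and hence $\mathcal{E}^{(n)}(\omega^{a})(X) \in \mathcal{L}[[\mathbf{q}]]^{(n)}$ with the claimed specializations.

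For the second assertion, I would show that the only poles of $\mathcal{A}_T(\omega^{a}; X)$ are simple and located at $X = (1+\mathbf{p})^{i}-1$ for $i \in \{1,\ldots,[n/2]\}$ (from the zeta factor, when $\omega^{2(a-i)}$ is trivial) and at $X = (1+\mathbf{p})^{r/2}-1$ for $r/2 \in \{0,1,\ldots,[n/2]\}$ with $r$ even (from the $L$-factor, when $(\mathfrak{d}_{T'}/*)\omega^{a-r/2}$ is trivial). The first product in $\mathcal{B}^{(n)}(X)$ vanishes simply at each $X = (1+\mathbf{p})^{i}-1$ for $1 \le i \le [n/2]$ and the second at each $X = (1+\mathbf{p})^{j}-1$ for $0 \le j \le [n/2]$, so together they vanish to total order two at each $X = (1+\mathbf{p})^{i} - 1$ with $i \ge 1$, where both pole types can accumulate, and to order one at $X = 0$, where only the $L$-pole (for $r = 0$) can arise. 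This exactly clears the denominators and yields $\overline{\mathcal{E}}^{(n)}(\omega^{a})(X) \in \Lambda[[\mathbf{q}]]^{(n)}$.

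The main obstacle is the uniform control of pole locations and orders as $T$ varies: one must verify that the potential poles never occur outside the predicted finite set $\{(1+\mathbf{p})^{j}-1 \,|\, 0 \le j \le [n/2]\}$ and that their order is at most one for each coefficient, so that the fixed polynomial $\mathcal{B}^{(n)}(X)$---independent of $T$---suffices to clear every Fourier coefficient simultaneously. A secondary technical point is the case $p = 2$, where $\varphi(\mathbf{p}) = 2$ makes more characters trivial modulo the torsion subgroup, so one must verify the same divisibility properties in that setting.
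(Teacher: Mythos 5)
Your proposal follows essentially the same route as the paper: interpolate each factor of the explicit formula in Theorem 3.2 (ii) — the Kubota--Leopoldt branches $\varPsi$ for the zeta and $L$-factors (with at most a simple pole when the attached character is trivial), and the $F_l(T';\,\cdot)$ factors via $l^{\kappa}=\omega^{a}(l)(1+X)^{s(\langle l\rangle)}$ — and then observe that the two products defining $\mathcal{B}^{(n)}(X)$ vanish exactly along the finitely many divisors $(1+{\bf p})^{-2i}(1+X)^{2}-1$ and $(1+{\bf p})^{-j}(1+X)-1$ where those simple poles can occur, uniformly in $T$ since the pole locations depend only on ${\rm rank}\,T$. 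This matches the paper's construction of $\mathcal{A}_T(\omega^a;\,X)$ in equation (17) and its denominator analysis, so the argument is correct and not a genuinely different proof.
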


\begin{proof}
As is well-known by Deligne-Ribet \cite{D-R80} (generalizing the previous work of Kubota-Leopoldt), 
associated to a quadratic Dirichlet character $\xi$, a Dirichlet character $\chi$ taken as above, 
and a power of the Teichm\"uller character $\omega^a$ with $0 \le a<\varphi({\bf p})$
, there exists ${\varPhi}(\xi\chi\omega^a;\, X) \in 
\Lambda
$ such that for each positive integer $k>1$, we have
\[
{\varPhi}(\xi\chi\omega^a;\,
(1+{\bf p})^k -1)=
\left\{
\begin{array}{ll}
((1+{\bf p})^k -1)\cdot L^{\{p\}}(1- k,\,\omega^{-k}) & \textrm{if $\xi\chi\omega^a$ is trivial}, \\[2mm]
L^{\{p\}}(1- k,\,\xi\chi\omega^{a-k}) & \textrm{otherwise}. 
\end{array}
\right.
\]
Accordingly, put 
\[
{\varPsi}(\xi\chi\omega^a;\, X):=
\left\{
\begin{array}{ll}
X^{-1} {\varPhi}(\xi\chi\omega^a;\, X) & \textrm{if $\xi\chi\omega^a$ is trivial}, \\[2mm]
{\varPhi}(\xi\chi\omega^a;\, X) & \textrm{otherwise}. 
\end{array}
\right.
\]
We easily see that  
${\varPsi}(\xi\chi\omega^a;\, (1+{\bf p})^k -1)=L^{\{p\}}(1- k,\,\xi\chi\omega^{a-k})$ for each $k>1$. 
More generally, it turns out that if $\varepsilon : 1+{\bf p}\mathbb{Z}_p \to \overline{\mathbb{Q}}_p^{\times}$ is a character of finite order, 
then 
\begin{equation}
{\varPsi}(\xi\chi\omega^a;\, \varepsilon(1+{\bf p})(1+{\bf p})^k -1)=L^{\{p\}}(1- k,\,\xi\chi\omega^{a-k}\varepsilon)
\end{equation}
for any $k>1$. 
On the other hand, for each $x \in 1+{\bf p}\mathbb{Z}_p$, put $s(x):=\log_p(x)/\log_p(1+{\bf p})$, where $\log_p
$ is the $p$-adic logarithm function in the sense of Iwasawa, and thus we have $s: 1+{\bf p}\mathbb{Z}_p \stackrel{\sim}{\to} \mathbb{Z}_p$. 
Then for each $T=\left[\begin{array}{c|c} 
T' & \\ \hline
& 0_{n-r}
\end{array}\right] \in {\rm Sym}_n^*(\mathbb{Z})$ \vspace*{1mm}with $T' \in {\rm Sym}_r^*(\mathbb{Z}) \cap {\rm GL}(r,\mathbb{Q})$ and $0 \le r \le n$, we define 
$\mathcal{A}_{\chi\omega^a}(T;\,X) \in \mathcal{L}$ as follows:  
\begin{eqnarray}
\mathcal{A}_{\chi\omega^a}(T;\,X) 
&=& 2^{[(r+1)/2]-[(n+1)/2]} \prod_{i=[r/2]+1}^{[n/2]} {\varPsi}(\chi\omega^{2a-2i};\,
(1+{\bf p})^{-2i}(1+X)^2 -1)  \\ 
&&\times 
\left\{
\begin{array}{ll}
{\varPsi}(\left({\mathfrak{d}_{T'} \over *}\right) \chi\omega^{a-r/2};\,(1+{\bf p})^{-r/2}(1+X) -1) & \\[5mm]
\vspace*{5mm}
\hspace*{5mm}\times \displaystyle\prod_{\scriptstyle l \,\mid\, \mathfrak{f}_{T'}, \atop {\scriptstyle l \ne p}}  
F_l(T';\, \chi\omega^a(l)\, l^{-r-1}(1+X)^{s(\langle l \rangle)}) & \textrm{ if $r$ is even}, \\

 \displaystyle\prod_{\scriptstyle l \,\mid\, \mathfrak{D}_{T'}, \atop {\scriptstyle l \ne p} }  
F_l(T';\, \chi\omega^a(l)\, l^{-r-1}(1+X)^{s(\langle l \rangle)}) & \textrm{ if $r$ is odd}.  
\end{array}\right. 
\nonumber 
\end{eqnarray}
It follows from Theorem 4.2 (II) that if $\kappa > n+1$ and further if $\kappa \equiv a \pmod{\varphi({\bf p})}$ (or equivalently, $\omega^{a-\kappa}$ is trivial), 
then  
\[
\mathcal{A}_{\chi\omega^a}(T;\,(1+{\bf p})^\kappa -1)=
A_{\kappa,\,\chi}^{*}(T). 
\]
Thus we obtain the former assertion. 
Moreover, it follows directly from Equation (14) that $\mathcal{B}^{(n)}(X)\cdot\mathcal{A}_{\chi\omega^a}(T;\,X) \in \Lambda$ for all $T$. 
Therefore $\overline{\mathcal{E}}_{\chi\omega^a}^{(n)}
(X)
\in \Lambda[[{\bf q}]]^{(n)}$. 
This completes the proof.  
\end{proof}

\if0
\begin{rem}
Whenever $p$ is odd, 
Panchishkin \cite{Pan00} has constructed a similar formal Fourier expansion belonging to $\mathcal{L}[[{\bf q}]]^{(n)}$, 
which interpolates characteristic twists of $E_{\kappa,\,\chi}^{(n)}$ modulo $p$. 
In that case, 
we may derive some $p$-adic interpolation properties of their Fourier coefficients taken  
only for $0< T \in {\rm Sym}_{n}^{*}(\mathbb{Z})$ with $
\gcd(p,\, \det T)=1$, and these coefficients are coincident with those of $A_{\kappa,\,\chi}^{*}(T)=A_{\kappa,\,\chi}(T)$ for such $T$. 
\end{rem}
\fi 

\section{Semi-ordinary $\Lambda$-adic Siegel Eisenstein series}

In this section, we fix an {\it odd}\, prime number $p$. We show that for a Dirichlet character $\chi$ modulo a positive odd integer $M$ with $p \nmid M$ 
and an 
integer $a$ with $0 \le a < p-1$, 
the formal Fourier expansion $\mathcal{E}_{\chi\omega^a}^{(n)}(X)$ 
(resp. $\overline{\mathcal{E}}_{\chi\omega^a}^{(n)}(X)$) with coefficients in $\mathcal{L}=\mathbb{Q}_p(\chi)[[X]]$ 
(resp. $\Lambda=\mathbb{Z}_p[\chi][[X]]$) defined in Theorem 4.4, give rise to classical Siegel modular forms via 
the specialization at $X=\varepsilon(1+p
)(1+p
)^{\kappa} -1$, where $\kappa$ is a positive integer sufficiently large 
and $\varepsilon : 1+p\mathbb{Z}_p \to \overline{\mathbb{Q}}_p^\times$ is a character of finite order. 

\medskip

First, from Lemma 2.1\,(II), we deduce the following: 

\begin{thm}
If $\kappa$ is a 
positive integer with $\kappa>n+1
$ 
and further if $\varepsilon : 1+p\mathbb{Z}_p \to \overline{\mathbb{Q}}_p^\times$ is a character having exact order $p^m$ for some nonnegative integer $m$,  
then 
\[
\mathcal{E}_{\chi\omega^a}^{(n)}
(\varepsilon(1+p
)(1+p
)^{\kappa} -1)
=E_{\kappa,\,\chi\omega^{a-\kappa}\varepsilon}^{(n)} \in \mathscr{M}_\kappa(\Gamma_0(Mp^{m+1}
), \chi\omega^{a-\kappa}\varepsilon)^{(n)} 
\]
as long as $\omega^{2a-2\kappa}\varepsilon^2$ is non-trivial. 
\end{thm}

\begin{proof}For each $T=\left[\begin{array}{c|c} 
T' & \\ \hline
& 0_{n-r}
\end{array}\right] \in {\rm Sym}_n^*(\mathbb{Z})$ \vspace*{1mm}with $T' \in {\rm Sym}_r^*(\mathbb{Z}) \cap {\rm GL}(r,\mathbb{Q})$ and $0 \le r \le n$, 
the equation (14) is specialized as 
\begin{eqnarray}
\mathcal{A}_{\chi\omega^a}(T;\,\varepsilon(1+p
) (1+p
)^{\kappa}-1)
&=& 2^{[(r+1)/2]-[(n+1)/2]} \prod_{i=[r/2]+1}^{[n/2]} L^{\{p\}}(1-2\kappa+2i, \chi\omega^{2a-2\kappa}\varepsilon^2) \nonumber \\[2mm] 
&& \times 
\left\{
\begin{array}{ll}
L^{\{p\}}(1-\kappa+r/2, \, \left({\mathfrak{d}_{T'} \over *}\right) \chi\omega^{a-\kappa}\varepsilon) 
 & \\[5mm]
\hspace*{3mm}\times \displaystyle\prod_{\scriptstyle l \,\mid\, \mathfrak{f}_{T'}, \atop {\scriptstyle l \ne p}}  
F_l(T';\, \chi\omega^{a-\kappa}\varepsilon(l)\,l^{\kappa-r-1}) & \textrm{ if $r$ is even}, \vspace*{4mm} \\
 \displaystyle\prod_{\scriptstyle l \,\mid\, \mathfrak{D}_{T'}, \atop {\scriptstyle l \ne p} }  
F_l(T';\, \chi\omega^{a-\kappa}\varepsilon(l)\, l^{\kappa-r-1}) & \textrm{ if $r$ is odd}.  
\end{array}\right. 
\nonumber
\end{eqnarray}
Thus, if $\omega^{2a-2\kappa}\varepsilon^2$ is non-trivial, then Lemma 2.1 (II) yields  
\[
\mathcal{A}_{\chi\omega^a}(T;\,\varepsilon(1+{\bf p}) (1+{\bf p})^{\kappa}-1)=A_{\kappa,\,\chi\omega^{a-\kappa}\varepsilon}(T). 
\]
This completes the proof. 
\end{proof}

On the other hand, by exploiting the vertical control theorem for $p$-adic Siegel modular forms in the sense of Hida \cite{Hid02}, 
we may deduce a slight weaker version of the preceding theorem as follows: 

\begin{thm}
If $\kappa$ is a positive integer with ${\kappa>
n(n+1)/2}$,  
and further if $\varepsilon : 1+
p\mathbb{Z}_p \to \overline{\mathbb{Q}}_p^\times$ is a 
character having exact order $p^m$ for some nonnegative 
integer $m$, 
then 
\[
\mathcal{E}_{\chi\omega^a}^{(n)}(\varepsilon(1+
p)(1+
p)^{\kappa} -1) \in \mathscr{M}_{\kappa}(\Gamma_0(M
p^{m+1}),\chi\omega^{a-\kappa}\varepsilon)^{(n)}
 \]
as long as $\omega^{a-\kappa}\varepsilon$ is non-trivial. 
\end{thm}

\begin{proof}
It follows from Theorem 4.4 that 
$\mathcal{E}_{\chi\omega^a}^{(n)}((1+
p)^{\kappa'} -1) \in \mathscr{M}_{\kappa'}(\Gamma_0(Mp),\,\chi)^{(n)}$
for infinitely many integers $\kappa'$ with $\kappa' >n+1$ and $\kappa' \equiv a \pmod{p-1}$. Thus, Th\'eor\`eme 1.1 in \cite{Pi11} (generalizing 
\cite{Hid02} in more general settings) turns out that 
if an integer $\kappa
$ is sufficiently large, 
then the specialization of $\mathcal{E}_{\chi\omega^a}^{(n)}(X)$ at $X=\varepsilon(1+
p)(1+
p)^{\kappa} -1$ 
\if0 
$\mathcal{E}^{(n)}(\omega^a)(\varepsilon(1+
p)(1+
p)^{\kappa} -1)$ 
\fi 
gives rise to an overconvergent $p$-adic Siegel modular form of weight $\kappa$, tame level $M$ and Iwahori level $p^{m+1}$ with 
character $\chi\omega^{a-\kappa}\varepsilon$ (for the precise definition, see \cite{S-U06,Pi11}). 
In addition, the explicit formula for Fourier coefficients of 
$\mathcal{E}_{\chi\omega^a}^{(n)}(\varepsilon(1+p
) (1+p
)^{\kappa}-1)$ also yields that it is an eigenform of $U_{p,n}$ whose eigenvalue 
is $1$. 
Therefore, if $\kappa$ satisfies the condition $\kappa>n(n+1)/2$ (cf. Hypoth\`ese 4.5.1 in \cite{B-P-S16}
), then the above-mentioned overconvergent form is indeed classical, that is, 
\[
\mathcal{E}_{\chi\omega^a}^{(n)}(\varepsilon(1+
p)(1+
p)^{\kappa} -1) \in \mathscr{M}_{\kappa}(\Gamma_0(M
p^{m+1}),\chi\omega^{a-\kappa}\varepsilon)^{(n)}\] 
(cf. Th\'eor\`eme 5.3.1 in [ibid]
). Thus we obtain the assertion. 
\end{proof}

\begin{rem}
On the contrary to Theorem 4.4, we may not deduce 
similar statements to Theorems 5.1 and 5.2 in the case where $p=2$. 
\end{rem}

Now, as a summary 
of Theorems 4.4, 5.1 and 5.2 above, we have the following statement which can be viewed as a generalization of Fact 1.1: 

\begin{thm}
Let $\chi$ be a Dirichlet character modulo $M$ taken as above. 
For each integer $a$ with $0 \le a < p-1$, 
there exists a formal Fourier expansion \ 
$\overline{\mathcal{E}}_{\chi\omega^a}^{(n)}(X) \in \Lambda[[{\bf q}]]^{(n)}$ such that 
if $\kappa > n(n+1)/2$ and further if $\varepsilon : 1+p\mathbb{Z}_p \to \overline{\mathbb{Q}}_p^\times$ 
is a character having exact order $p^m$ with some $m \ge 0$, then 
\[
\overline{\mathcal{E}}_{\chi\omega^a}^{(n)}(\varepsilon(1+
p)(1+
p)^{\kappa} -1) \in \mathscr{M}_{\kappa}(\Gamma_0(M
p^{m+1}),\chi\omega^{a-\kappa}\varepsilon)^{(n)}
\]
is a Hecke eigenform whose eigenvalue of $U_{p,n}$ is $1$. 
In particular, 
\[
\overline{\mathcal{E}}_{\chi\omega^a}^{(n)}(\varepsilon(1+
p)(1+
p)^{\kappa} -1) = C_{\kappa,\,\varepsilon}^{(n)} \times \left\{
\begin{array}{ll}
(E_{\kappa,\,\chi}^{(n)})^{*} & \textrm{if $\omega^{a-\kappa}\varepsilon$ is trivial}, \\[2mm]
E_{\kappa,\,\chi\omega^{a-\kappa}\varepsilon}^{(n)} & \textrm{if $\omega^{2a-2\kappa}\varepsilon^2$ is non-trivial},  
\end{array}
\right.
\]
where $C_{\kappa,\,\varepsilon}^{(n)}=\mathcal{B}^{(n)}(\varepsilon(1+
p)(1+
p)^{\kappa} -1)$. 
\end{thm}


\end{document}